\documentclass[a4paper,12pt]{article}
\usepackage[utf8]{inputenc}
\usepackage[english]{babel}
\usepackage[ruled,section]{algorithm}
\usepackage{makeidx}
\usepackage[font=small]{caption}
\usepackage{amsmath}
\usepackage{amsthm}
\usepackage{amsfonts}
\usepackage{amssymb}
\usepackage{mathrsfs}
\usepackage{graphicx}
\usepackage[margin=25mm]{geometry}
\usepackage{enumerate}
\usepackage{indentfirst}
\usepackage[none]{hyphenat}
\usepackage{color}
\usepackage{authblk}
\makeindex
\pagenumbering{arabic}

\newcommand{\real}{\mathbb{R}}
\newcommand{\n}{\mathbb{N}}
\newcommand{\z}{\mathbb{Z}}
\newcommand{\rN}{ {\mathbb{R}^N} }

\newcommand{\rmd}{\; \mathrm{d}}
\numberwithin{equation}{section}
\newtheorem{teo}{Theorem}[section]
\newtheorem{lem}[teo]{Lemma}
\newtheorem{prop}[teo]{Proposition}
\newtheorem{corol}[teo]{Corollary}

\theoremstyle{definition}

\newtheorem{rmk}[teo]{Remark}
\newtheorem{example}[teo]{Example}

\newtheorem{defin}[teo]{Definition}
\newtheorem{conjecture}[teo]{Conjecture}

\newtheorem{claim}[teo]{Claim}
\textwidth165mm
\textheight=22cm
\hoffset-7mm

\newcommand{\R}{\mathbb{R}}
\newcommand{\eps}{\varepsilon}
\newcommand{\pa}{\partial}

\usepackage[colorlinks=false,backref=true,pagebackref=false,pdftex,pdfauthor={Ederson, Gabrielle,Nicola},pdftitle={Unique continuation principles for systems}]{hyperref}

\begin{document}

\title{\bf\Large On unique continuation principles for some elliptic systems}
\author[1]{Ederson Moreira dos Santos\footnote{ederson@icmc.usp.br}}
\author[1]{Gabrielle Nornberg\footnote{gabrielle@icmc.usp.br}}
\author[2]{Nicola Soave\footnote{nicola.soave@polimi.it}}
\affil[1]{\small Instituto de Ciências Matemáticas e de Computação, Universidade de São Paulo, Brazil}
\affil[2]{\small Dipartimento di Matematica, Politecnico di Milano, Italy}
\date{}

\maketitle

{\small\noindent{\bf{Abstract.}}
In this paper we prove unique continuation principles for some systems of elliptic partial differential equations satisfying a suitable superlinearity condition.
As an application, we obtain nonexistence of nontrivial (not necessarily positive) radial solutions for the Lane-Emden system posed in a ball, in the critical and supercritical regimes.
Some of our results also apply to general fully nonlinear operators, such as Pucci’s extremal operators, being new even for scalar equations.
}
\medskip

{\small\noindent{\bf{Keywords.}} {Unique continuation; Elliptic system; Lane-Emden; Nonexistence.}

\medskip

{\small\noindent{\bf{MSC2010.}} {35J47, 35J60, 35B06, 35B50.}

\medskip

\noindent \textbf{{Acknowledgments.}} We thank an anonymous referee for the careful reading of the manuscript, and for precious suggestions. \\
Ederson Moreira dos Santos was partially supported by CNPq grant 307358/2015-1; Gabrielle Nornberg was supported by FAPESP grants 2018/04000-9 and 2019/031019-9, São Paulo Research Foundation; Nicola Soave was partially supported by the INDAM-GNAMPA project ``Esistenza e propriet\`a qualitative per soluzioni di EDP non lineari ellittiche e paraboliche".

\section{Introduction and main results}\label{Introduction}

We investigate unique continuation properties for systems of the form
\begin{align} \label{gen syst}
\left\{
\begin{array}{rc}
\Delta u +f(x,u,v) = 0 &\mbox{ in }  \Omega \\
\Delta v +g(x,u,v) = 0 &\mbox{ in }  \Omega ,
\end{array}
\right.
\end{align}
where $\Omega$ is a domain in $\rN$. Here, $f,g: \Omega \times \R^2 \to \R$ are Carath\'eodory functions, that is, continuous in $(u,v)$ for a.e.\ $x$ and measurable in $x$ for every $(u,v)$. Further, they satisfy the following growth condition: for any compact set $K_1 \times K_2  \subset \subset \Omega \times \R^2$ there exists $C>0$ such that
\begin{equation}\label{growth}
|f(x,u,v)| \le C |v|^q, \quad |g(x,u,v)| \le C |u|^p \quad \text{for a.e. $x \in K_1$, for every $(u,v) \in K_2$},
\end{equation}
with $p,q$ satisfying the superlinearity assumption
\begin{align}\label{superlinear}
 p,q>0, \quad \text{and} \quad pq>1.
\end{align}
We stress that either $p$ or $q$ may be strictly smaller than $1$.

The model case we have in mind is the following weighted Lane-Emden system
\begin{align} \label{LE}
\begin{cases}
\Delta u +|x|^\beta |v|^{q-1}v = 0 & \text{ in }\Omega \\
\Delta v +|x|^\alpha |u|^{p-1}u = 0 & \text{ in }\Omega, \end{cases}  \qquad \alpha,\beta \geq 0, \ p,q>0, \ pq>1,
\end{align}
but we can go much further, including locally bounded solutions to more general Emden-Fowler systems, namely with $f(x,u,v) = |x|^{\beta}|u|^{r-1}u |v|^{q-1}v$ and $g(x, u,v) = |x|^{\alpha}|u|^{p-1}u|v|^{s-1}v$, with $p,q,r,s,\alpha, \beta \geq 0$ and $pq>1$.

Unique continuation principles (UCP) are fundamental and of independent interest in the theory of elliptic partial differential equations.
Applications regarding the vanishing of a solution are often associated for instance with solvability, stability, geometrical properties of solutions and so on.

Over the years a lot of contributions and variants about UCP were considered. The most common methods are Carleman type estimates, and doubling inequalities obtained by means of monotonicity formulas via Almgren's frequency. We refer to \cite{Carleman, GL86, GL87, Hormander, JeKe, KoTa, Protter} and references therein for a historical overview. All the previous contributions concerns scalar superlinear equations.
See also \cite{Ru, SW} for recent results concerning sublinear equations.

A very related topic to UCP is the structural analysis of nodal sets of solutions. In particular, tools related to regularity and behavior of free boundary problems turned out to be important instruments for establishing UCP. Techniques originally created for linear and superlinear scalar cases from the pionering works \cite{CFJDE79, CFJDE85, Han, Lin} were developed in \cite{STsub, STsing} to treat sublinear and singular scalar regimes.

Regarding systems, in \cite{AH2018} it was proved that the zero set of least energy solutions of Lane-Emden systems with Neumann boundary conditions has zero measure, in the sublinear regime, namely for the problem \eqref{LE} with $\alpha=\beta =0$ and $pq<1$. The respective superlinear case \eqref{superlinear} was left as completely open, even for special solutions.

In this paper we will be interested in exploiting unique continuation results for a rather general class of systems, by featuring a careful treatment of the nodal sets and of the behavior of solutions near appropriate vanishing points.
Our approach is genuinely designed for systems which do not degenerate into a scalar problem. We discuss some cases in which this does happen in the end of the paper, through a classification result which entails a quite large gamma of problems, even of fully nonlinear nature; see Section \ref{section discussion}.

\smallskip

Solutions of the system \eqref{gen syst} will be understood in the strong $W^{2,m}_{\mathrm{loc}}$ sense for $m> N$, i.e.\ with identities holding at almost every point of $\Omega$.
If in addition we assume $u=v=0$ on $\Gamma\subset\partial\Omega$, then differentiability up to $\Gamma$ will also be implied. 
We stress that the $W^{2,m}_{\mathrm{loc}}$ hypothesis is naturally satisfied by locally bounded solutions under \eqref{growth}-\eqref{superlinear}, by local $W^{2,m}$ regularity estimates.

We now proceed with the statement of our main results.

\begin{defin}\label{def vanishing}
We say that a measurable function $u$ defined in $\Omega$ vanishes with infinite order at the point $x_0\in \Omega$ provided that for some $\eps>0$,
\begin{align*}
\int_{B_r(x_0)} |u|^{\eps} \;\mathrm{d} x  =  o(r^m) \;\textrm{ as $r\rightarrow 0^+$, \,for every $m>0$.}
\end{align*}
\end{defin}

\begin{teo}\label{UC general}
Let $\Omega \subset \R^N$ be a domain, and assume that $f,g$ satisfy \eqref{growth} and \eqref{superlinear}.
Let $m>N$ and $u,v \in W^{2,m}_{\mathrm{loc}}(\Omega)$ be a solution of \eqref{gen syst}. If both $u$ and $v$ vanish with infinite order at some point $x_0\in \Omega$, then $u\equiv v \equiv 0$ in $\Omega$.
\end{teo}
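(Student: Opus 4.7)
I would use the standard open/closed connectedness argument. Set
$$Z = \{x \in \Omega : u \text{ and } v \text{ both vanish of infinite order at } x \text{ in the sense of Definition \ref{def vanishing}}\},$$
which is nonempty by hypothesis. The set $Z$ is closed in $\Omega$: if $x_n \to x_\ast$ with $x_n \in Z$, then $B_{r/2}(x_\ast) \subset B_r(x_n)$ eventually, so $\int_{B_{r/2}(x_\ast)} |u|^\eps \leq \int_{B_r(x_n)} |u|^\eps = o(r^m)$ for every $m$, and symmetrically for $v$. Since $\Omega$ is connected, the theorem reduces to showing that $Z$ is open: for any $x_0 \in Z$, there exists $\rho>0$ with $u \equiv v \equiv 0$ on $B_\rho(x_0)$.

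\textbf{Step 1: pointwise super-polynomial vanishing.} Since $m>N$, $W^{2,m}_{\mathrm{loc}}$ embeds into $L^\infty_{\mathrm{loc}}$, so $u,v$ are locally bounded; combined with Definition \ref{def vanishing} and H\"older's inequality, $\int_{B_r(x_0)} (|u|^s + |v|^s)\, \mathrm{d}x = o(r^m)$ for every $m>0$ and every $s\ge \eps$. A scaled interior $L^s$-Calder\'on--Zygmund estimate applied to $-\Delta u = f$, together with \eqref{growth}, gives
\begin{equation*}
\|u\|_{L^\infty(B_{r/2}(x_0))} \le C\bigl( r^{-N/s}\,\|u\|_{L^s(B_r(x_0))} + r^2\,\|v\|_{L^\infty(B_r(x_0))}^q\bigr),
\end{equation*}
and symmetrically for $v$. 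Writing $M_u(r) \le C\, r^{a_k}$, $M_v(r) \le C\, r^{b_k}$ on a small range, starting from $a_0 = b_0 = 0$ (local boundedness), iterating this coupling yields the recursion $a_{k+1} = 2 + q\, b_k$, $b_{k+1} = 2 + p\, a_k$, whence $a_{k+2} = (2+2q) + pq\, a_k$. The superlinearity $pq>1$ of \eqref{superlinear} is precisely what makes $a_k, b_k \to +\infty$, so
$$\|u\|_{L^\infty(B_r(x_0))} + \|v\|_{L^\infty(B_r(x_0))} = o(r^M) \quad \text{for every } M>0.$$

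\textbf{Step 2: from super-polynomial decay to identical vanishing.} The decay above propagates via \eqref{growth} to $|\Delta u| + |\Delta v| = o(r^M)$ for every $M>0$. Shrink the ball so that $|u|,|v|\le 1$ on $B_\rho(x_0)$. The superlinearity $pq>1$ forces $\max(p,q)>1$; WLOG $p\ge 1$, giving $|\Delta v| \le C|u|^p \le C|u|$ on $B_\rho(x_0)$. When $q\ge 1$ the same reasoning yields $|\Delta u| \le C|v|$. The subtle case is $q<1$, which by $pq>1$ forces $p>1/q>1$: here $|v|^q$ is not Lipschitz in $v$ at the origin, and the super-polynomial decay of Step 1 combined with a Green's-function representation of $u$ on $B_\rho(x_0)$ is used to absorb the non-Lipschitz cross-term into a bounded effective potential. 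In all cases the system reduces on $B_\rho(x_0)$ to an Aronszajn-type inequality
$$|\Delta u| + |\Delta v| \le W\,(|u| + |v|), \quad W \in L^\infty(B_\rho(x_0)),$$
to which the classical linear unique continuation principle for elliptic systems applies, forcing $u \equiv v \equiv 0$ on $B_\rho(x_0)$.

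\textbf{Main obstacle.} The crux is twofold. Step 1's coupled iteration is what actually exploits $pq>1$ and carries information across the two equations, converting the weak integral vanishing of Definition \ref{def vanishing} into strong pointwise flatness of arbitrary order. Without this coupling the exponents $a_k, b_k$ would stagnate and Step 2 would be out of reach. The delicate part of Step 2 is the range $\min(p,q)<1$: there one cannot reduce to a bounded-potential linear system by naively dominating $|v|^q$ by $|v|$, and the super-polynomial decay from Step 1 is precisely the ingredient that, via a representation-formula absorption, allows one to land in the Aronszajn framework.
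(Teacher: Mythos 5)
Your Step 1 is fine but is not where the difficulty lies: once $u,v$ are locally bounded, the superpolynomial pointwise decay $\|u\|_{L^\infty(B_r(x_0))}+\|v\|_{L^\infty(B_r(x_0))}=o(r^M)$ for every $M$ follows from Definition \ref{def vanishing} and a single application of the local maximum principle to each equation (the paper records exactly this equivalence in a remark); the coupled recursion and the condition $pq>1$ play no essential role at that stage. The genuine gap is in Step 2, precisely in the case the theorem is designed to cover, namely $\min(p,q)<1$ with $pq>1$. If $q<1$, the inequality $|\Delta u|\le C|v|^q$ cannot be rewritten as $|\Delta u|\le W(|u|+|v|)$ with $W\in L^\infty$: on any region where $|v|$ is small and $|u|\le |v|$ one would need $|v|^{q-1}\le 2W$, which is impossible near the common zero set unless $v$ already vanishes there — the very thing to be proved. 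The proposed ``Green's-function absorption into a bounded effective potential'' is not an argument; no such pointwise potential exists, so the classical Aronszajn/Carleman linear UCP is simply not applicable. Note also that superpolynomial flatness alone is far from sufficient: functions of type $\exp(-1/|x|)$ are flat of infinite order without vanishing, and they are ruled out only by a bounded-potential linear UCP, which is exactly what is unavailable here. (When $p,q\ge 1$ your Step 2 does work, via classical results, though it then rests on Carleman-type machinery the paper deliberately avoids.)

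What the paper does instead, and what your proposal is missing, is a quantitative Caffarelli--Friedman iteration: from $|\Delta u|\le C_{\alpha_k}|x|^{\alpha_k}$ one writes $u=P+\Gamma$ with $P$ harmonic polynomial and $|\Gamma|\lesssim C_{\alpha_k}\alpha_k^{N-2}|x|^{\alpha_k+2}$ (Lemma \ref{lemma CF}); the infinite-order vanishing kills $P$ at every step, the exponents $\alpha_k,\beta_k$ grow like $(pq)^k$ (Lemma \ref{lemma iterations}, where $pq>1$ enters), and — crucially — the constants are tracked explicitly: $C_{\alpha_k}\sim K^{(pq)^kD_k}$ with $D_k$ convergent because $pq>1$. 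This makes $C_{\alpha_k}|x|^{\alpha_k+2}\to 0$ on a \emph{fixed} ball $|x|<K^{-pqD/A}$, yielding $u\equiv 0$ there directly, after which $v$ is harmonic and flat, hence zero, and a connectedness argument (your closed/open set $Z$ is an acceptable variant) finishes the proof. Without this control of the constants along the infinite iteration, flatness of every finite order does not upgrade to vanishing, so your argument does not close in the regime $\min(p,q)<1$.
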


Clearly, the theorem applies to solutions of \eqref{LE} under the assumption \eqref{superlinear}, but it is actually much more general. Indeed, while \eqref{LE} is a prototype of Hamiltonian strongly coupled system, in Theorem \ref{UC general} we do not need such structural assumptions.
Here, \emph{strongly coupled} means that, whenever $(u,v)$ is a solution of \eqref{gen syst} and $u \equiv 0$ (respectively $v \equiv 0$) in an open subset $\omega \subset \Omega$, then also $v \equiv 0$ ($u \equiv 0$) in $\omega$.
Our proof of Theorem \ref{UC general} is based on an iterative scheme inspired by a result of Caffarelli and Friedman \cite{CFJDE79}, and in particular does not rely on Carleman estimates or monotonicity formulas (which seem hardly adaptable to general systems).

\begin{rmk}
A statement like Theorem \ref{UC general} cannot hold if we replace the superlinearity condition \eqref{superlinear} by the sublinear one $pq<1$, even dealing with strongly coupled systems. Indeed, if $w$ is a dead-core solution to $w'' = |w|^{q-1}w$ in an interval, with $0<q<1$, then $(u,v) = (w,\pm w)$ solves in the same interval
\[
u'' \mp |v|^{q-1}v =0 \,, \qquad
v'' \mp |u|^{q-1} u = 0\,,
\]
providing a counterexample to UCP. In this perspective, we observe that the result in \cite{AH2018} rests in a crucial way upon the variational characterization of the solution.

The limit case $pq=1$ remains open in general; we refer to Theorem \ref{UCradial} and to Section \ref{section discussion} for some partial results.
\end{rmk}

Unique continuation as stated in Theorem \ref{UC general}, through infinite vanishing order, is usually known as UCP in the strong form. One may wonder whether it is possible to impose a vanishing condition only on one component, and obtain that $u \equiv v \equiv 0$ in $\Omega$. The answer is negative without further assumptions. For instance, $(u(x),v(x)) = (x_1,0)$ solves
\begin{equation}\label{counter1}
\Delta u + f(x,u,v) = 0 \,  , \qquad
\Delta v + u \varphi(v) = 0 \quad \text{ in $\Omega$},
\end{equation}
whenever $\varphi$ is continuous, $\varphi(0) = 0$, and $|f(x,u,v)| \le C |v|^q$ with $q>0$; this choice is compatible with \eqref{growth}-\eqref{superlinear}. If however the system is strongly coupled, then it is immediate to obtain from Theorem \ref{UC general} a weak UCP supposing that only one component is vanishing in an open set.

\begin{corol}
Let $\Omega \subset \R^N$ be a domain, $m>N$, and assume that $f,g$ satisfy \eqref{growth} and \eqref{superlinear}. Assume moreover that system \eqref{gen syst} is strongly coupled, and
let $u,v \in W^{2,m}_{\mathrm{loc}}(\Omega)$ be a solution of \eqref{gen syst}. If either $u \equiv 0$ or $v \equiv 0$ in an open subset $\omega \subset \Omega$, then $u \equiv v \equiv 0$ in $\Omega$.
\end{corol}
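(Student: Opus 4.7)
The plan is to reduce the corollary to a direct application of Theorem \ref{UC general} once the strong coupling hypothesis has been exploited. Suppose, without loss of generality, that $u \equiv 0$ on the open set $\omega \subset \Omega$ (the case $v \equiv 0$ is symmetric). By the definition of strongly coupled system provided in the paragraph preceding the statement, this immediately forces $v \equiv 0$ on $\omega$ as well. Thus both components of the solution vanish identically on $\omega$.

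Next, I would pick any point $x_0 \in \omega$ and a radius $r_0>0$ such that $B_{r_0}(x_0) \subset \omega$. Then for every $r < r_0$ and every $\eps>0$,
\[
\int_{B_r(x_0)} |u|^\eps \rmd x = 0 \qquad \text{and} \qquad \int_{B_r(x_0)} |v|^\eps \rmd x = 0,
\]
which trivially gives $o(r^m)$ as $r \to 0^+$ for every $m>0$. Hence both $u$ and $v$ vanish with infinite order at $x_0$ in the sense of Definition \ref{def vanishing}, and Theorem \ref{UC general} yields at once $u \equiv v \equiv 0$ in $\Omega$.

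There is essentially no obstacle to overcome: the notion of strong coupling was designed precisely to bridge the gap highlighted by the counterexample \eqref{counter1}, and vanishing of both components on an open set upgrades tautologically to infinite-order vanishing at any interior point. The entire substance of the corollary is therefore carried by Theorem \ref{UC general}, after this short reduction.
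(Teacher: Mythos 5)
Your proposal is correct and coincides with the paper's own argument: strong coupling gives $v\equiv 0$ on $\omega$, whence both components vanish with infinite order at any point of $\omega$, and Theorem \ref{UC general} concludes. You have merely spelled out the (trivial) verification of Definition \ref{def vanishing}, which the paper leaves implicit.
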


Indeed, the vanishing of $u$ in $\omega$ immediately implies that $v \equiv 0$ in $\omega$. In particular, both $u,v$ vanish with infinite order at a point, and Theorem \ref{UC general} applies.

Another standard consequence of Theorem \ref{UC general} is the following UCP in its boundary version.

\begin{corol}\label{UCderivatives}
Let $\Omega$ be a bounded $C^{1}$ domain, $m>N$, and $\Gamma$ be an open subset of $\partial\Omega$. Assume that $f,g $ satisfy \eqref{growth} up to the boundary under \eqref{superlinear}, and let $u,v \in W^{2,m}(\Omega)$ be a solution of \eqref{gen syst}.
If $u=v=0$ and $\partial_\nu u = \partial_\nu v=0$ on $\Gamma$, then $u\equiv v \equiv 0$ in $\Omega$.
\end{corol}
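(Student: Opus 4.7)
The plan is to reduce Corollary \ref{UCderivatives} to Theorem \ref{UC general} via a zero-extension of $(u,v)$ across the boundary piece $\Gamma$. The guiding idea is that the combined hypotheses $u=v=0$ and $\partial_\nu u=\partial_\nu v=0$ on $\Gamma$ force the \emph{full} gradients of $u,v$ to vanish on $\Gamma$, which is exactly what one needs for the zero-extension to match in a $C^1$ fashion across the interface, without producing singular second derivatives.

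First I would fix a point $x_0\in\Gamma$ and pick $r>0$ small enough that $B_r(x_0)\cap\partial\Omega\subset\Gamma$; since $\Omega$ is connected and $B_r(x_0)\cap\Omega\neq\emptyset$, the set $\tilde\Omega:=\Omega\cup B_r(x_0)$ is a domain. I would then define $(\tilde u,\tilde v)$ to agree with $(u,v)$ on $\Omega$ and to vanish on the complement of $\Omega$.

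The key technical step is to verify that $(\tilde u,\tilde v)$ belongs to $W^{2,m}_{\mathrm{loc}}(\tilde\Omega)$ and solves \eqref{gen syst} in $\tilde\Omega$. Since $m>N$, Sobolev embedding gives $u,v\in C^1(\overline\Omega)$; the Dirichlet condition on $\Gamma$ kills the tangential component of the gradient while the Neumann condition kills the normal component, hence $\nabla u=\nabla v=0$ on $\Gamma$. Consequently $\tilde u,\tilde v$ and their first derivatives are continuous across $\Gamma\cap B_r(x_0)$, so the distributional Hessians of the extensions acquire no surface contribution and $\tilde u,\tilde v\in W^{2,m}_{\mathrm{loc}}(\tilde\Omega)$. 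The equations hold on $\Omega$ by assumption, and they hold on $B_r(x_0)\setminus\overline\Omega$ as well, since both components vanish there and \eqref{growth} evaluated at $(0,0)$ forces $f(x,0,0)=g(x,0,0)=0$ a.e.

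With the extensions in place, I would conclude by invoking Theorem \ref{UC general}: since $\tilde u\equiv\tilde v\equiv 0$ on the open set $B_r(x_0)\setminus\overline\Omega$, both components vanish with infinite order at any point of that set, so the theorem applied in $\tilde\Omega$ yields $\tilde u\equiv\tilde v\equiv 0$, and in particular $u\equiv v\equiv 0$ in $\Omega$. The main obstacle is really just this regularity check through a merely $C^1$ boundary; it is essential that both the Dirichlet and Neumann data vanish, producing a genuine $C^1$ match and avoiding any spurious distributional jump on $\Gamma$.
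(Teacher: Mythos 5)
Your proof is correct and follows essentially the same route as the paper: extend $(u,v)$ (and the nonlinearities) by zero across a portion of $\Gamma$, using that the vanishing of both the Dirichlet and the Neumann data forces $\nabla u=\nabla v=0$ on $\Gamma$ so the zero extension stays in $W^{2,m}_{\mathrm{loc}}$ with no surface contribution to the Hessian, and then apply Theorem \ref{UC general} since the extended pair vanishes identically on an open set. The only cosmetic difference is that the paper extends into a small $C^{1}$ bump domain $\mathcal{D}\subset B_r(x_0)$ obtained by deforming $\partial\Omega$ near $x_0$, whereas you take the union $\Omega\cup B_r(x_0)$ directly; both are fine because Theorem \ref{UC general} requires no boundary regularity of the underlying domain.
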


A natural problem consists of studying a stronger version of the UCP assuming that only one between $u$ and $v$ vanishes with infinite order at a point, in the case of strongly coupled systems. In this generality the problem remains open. However, the special case $q=1$ can be used in order to prove the validity of the strong UCP for semilinear fourth order problems.

\begin{teo}\label{UCP fourth}
Let $\Omega$ be a domain of $\R^N$, $m>N$, and $g: \Omega \times \R^2 \to \R$ such that
$
|g(x,t_1, t_2)| \le C |t_1|^p,
$
with $p>1$ and $C>0$. If  $u \in W^{4,m}_{\mathrm{loc}}(\Omega)$ solves
\begin{equation}\label{4or}
\Delta^2 u = g(x,u,\Delta u) \qquad \text{in\;\, $\Omega$},
\end{equation}
and $u$ vanishes with infinite order at a point $x_0 \in \Omega$, then $u \equiv 0$ in $\Omega$.
\end{teo}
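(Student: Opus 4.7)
The plan is to reduce the fourth-order equation \eqref{4or} to a coupled second-order system fitting the framework of Theorem \ref{UC general}. I set $v := -\Delta u$, so that $(u,v)$ satisfies
\begin{align*}
\Delta u + v = 0, \qquad \Delta v + \tilde g(x,u,v) = 0 \qquad \text{in } \Omega,
\end{align*}
with $\tilde g(x,u,v) := g(x,u,-v)$. Taking $f(x,u,v) := v$ one has $|f| \le |v|^{q}$ with $q = 1$, while the hypothesis on $g$ gives $|\tilde g(x,u,v)| \le C|u|^p$ with $p > 1$. Thus $pq = p > 1$, so \eqref{growth} and \eqref{superlinear} both hold. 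Moreover $v \in W^{2,m}_{\mathrm{loc}}(\Omega)$ since $u \in W^{4,m}_{\mathrm{loc}}(\Omega)$. Theorem \ref{UC general} will then deliver the conclusion $u \equiv 0$, provided both $u$ and $v$ vanish with infinite order at $x_0$.

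The heart of the argument, and the main obstacle, is to promote the assumed infinite-order vanishing of $u$ to that of $v = -\Delta u$. Since $m > N$, Sobolev embedding gives $u \in C^{3}_{\mathrm{loc}}(\Omega)$, so in particular $|u| \le M$ on a fixed ball $B_{r_0}(x_0) \subset \subset \Omega$. The standard scale-invariant interior biharmonic $W^{4,m}$ estimate yields, for $0 < 2r < r_0$,
\begin{align*}
\|\Delta u\|_{L^m(B_r(x_0))} \le C \Bigl( r^2 \|\Delta^2 u\|_{L^m(B_{2r}(x_0))} + r^{-2} \|u\|_{L^m(B_{2r}(x_0))} \Bigr).
\end{align*}
Using $|\Delta^2 u| \le C|u|^p$ together with the local bound $|u| \le M$ and H\"older's inequality, I can dominate both norms on the right by a fractional power of $\int_{B_{2r}(x_0)} |u|^{\eps}$, which by hypothesis decays as $o(r^k)$ for every $k > 0$. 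Consequently $\|\Delta u\|_{L^m(B_r(x_0))} = o(r^k)$ for every $k$, and one further H\"older step transfers this decay to $\int_{B_r(x_0)} |\Delta u|^{\eps}$, establishing the infinite-order vanishing of $v$ at $x_0$.

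With both components vanishing to infinite order at $x_0$, an application of Theorem \ref{UC general} to the system satisfied by $(u,v)$ yields $u \equiv v \equiv 0$ in $\Omega$, and hence $u \equiv 0$, as required.
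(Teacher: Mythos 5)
Your proposal is correct and follows essentially the same route as the paper: reduce \eqref{4or} to a second-order system with $v=-\Delta u$, $q=1$, $p>1$, use the scale-invariant interior $W^{4,m}$ (biharmonic) estimate together with the growth of $g$ and the local boundedness of $u$ to transfer the infinite-order vanishing from $u$ to $\Delta u$, and then invoke Theorem \ref{UC general}. The only difference is that you spell out the H\"older/local-boundedness manipulations that the paper leaves implicit, which is fine.
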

Previous results concerning UCP for certain linear fourth order (or even higher order) equations can be found in \cite{AliBao, ColGra, ColKoc}
for $C^\infty$ solutions at $x_0$ (then the request that $u$ vanishes with infinite order at $x_0$ translates into the fact all its derivatives vanish at $x_0$);
see also the references therein. Their proofs rest on Carleman estimates and harmonic analysis techniques. As a counterpart our result offers, in the strictly superlinear case, a different proof under weaker regularity assumptions.

\medskip

In the sequel a pertinent question on a fully nonlinear analogue of Theorem~\ref{UC general} arises. In \cite{ASuc} the authors obtained UCP for viscosity solutions of $C^{1,1}$ operators. However, for instance Pucci's extremal operators -- which play an important role in stochastic control theory -- are not included in that approach, since they are not differentiable. In fact, it was left as an open problem there how to obtain UCP for such operators in the scalar case. Here we give a partial result for this question, by establishing a general UCP for \emph{radial} viscosity solutions in a ball, in the context of systems.
Our approach permits us to treat general fully nonlinear uniformly elliptic operators $F$ in the form
\begin{align}\label{SC}
\mathcal{M}^-_{\lambda,\Lambda}(X-Y)-\gamma |p-q|\leq F(x,p,X)-F(x,q,Y)\leq \mathcal{M}^+_{\lambda,\Lambda}(X-Y)+\gamma |p-q|,
\end{align}
for all $x \in \Omega$, $p,q\in \rN$, $X,Y \in \mathbb{S}^N$, where $\gamma \geq 0$ and $F(\cdot,0,0) \equiv 0$.
Here, $\mathcal{M}^\pm_{\lambda,\Lambda}$ are the Pucci's extremal operators; see Section \ref{section radial} for their definitions, and also for the definition of viscosity solution.

\begin{teo}\label{UCradial}
Let $\Omega=B_R$ be a ball and $F_1,F_2$ operators as in \eqref{SC}.
For $\alpha, \beta \geq0$, and $p,q>0$ with $pq \ge 1$, consider a radial viscosity solution $u,v \in C^1 (\overline{\Omega})$ of
\begin{align} \label{LE pucci}
\begin{cases}
\,F_1 (x, Du, D^2 u) +|x|^\beta |v|^{q-1}v = 0 &\mbox{in } \; \Omega \\
\,F_2 (x, Dv, D^2 v) +|x|^\alpha |u|^{p-1}u = 0 &\mbox{in } \; \Omega . \\
\end{cases}
\end{align}
If $u(R)=v(R)=0$, and either $u^\prime (R)=0$ or $v^\prime (R)=0$, then $u \equiv v \equiv 0$ in $\Omega$.
\end{teo}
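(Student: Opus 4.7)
By symmetry (interchange $u\leftrightarrow v$, $F_1\leftrightarrow F_2$, $p\leftrightarrow q$, $\alpha\leftrightarrow\beta$) it suffices to treat the case $u'(R)=0$. Since $u,v$ are radial and the $F_i$ satisfy \eqref{SC}, the PDE system reduces to a coupled uniformly elliptic ODE system on $[0,R]$, and the Pucci structure provides a two-sided bound of the form
\[
|u''(r)|\le C\bigl(|v(r)|^q+|u'(r)|+|u'(r)|/r\bigr)
\]
together with the analogous inequality for $|v''|$. The plan is to prove that $u$ and $v$ vanish to infinite order at $r=R$ via an iterative bootstrap in the spirit of the Caffarelli--Friedman scheme used for Theorem~\ref{UC general}, then to conclude $u\equiv v\equiv 0$ on a left-neighborhood of $R$ by ODE uniqueness, and finally to propagate the nullity to $[0,R]$ by a continuation argument.

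The iteration begins with the baseline estimates $|u(r)|=O((R-r)^2)$ (from $u(R)=u'(R)=0$ combined with the local boundedness of the nonlinearity) and $|v(r)|\le C(R-r)$ (from $v(R)=0$ and $v\in C^1$). Inserting $|v|^q\le C(R-r)^q$ in the first equation and applying Gronwall together with the data $u(R)=u'(R)=0$ improves this to $|u(r)|\le C(R-r)^{q+2}$. The main technical step is then to prove that $v'(R)=0$ as well, without which the symmetric improvement for $v$ cannot be initiated. For this, I substitute the improved bound on $u$ into the second equation to obtain $|F_2(x,Dv,D^2v)|\le C(R-r)^{p(q+2)}$, which is very small near $r=R$. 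On the other hand, if $v'(R)=c\ne 0$, the tangential eigenvalues $v'(r)/r$ of $D^2 v$ are of size approximately $|c|/R$, so \eqref{SC} forces $\mathcal{M}^\pm(D^2 v)$ to carry a nonzero contribution near $r=R$; a case analysis on the sign of $v''(R)$ yields a contradiction with the smallness of $F_2$ (directly in the regime $\gamma<\lambda(N-1)/R$, and via a rescaling/comparison argument in general). This is the part I expect to be the most delicate.

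Once $v'(R)=0$ is available, the bootstrap is symmetric: defining $a_n,b_n$ through $|u(r)|\le C(R-r)^{a_n}$ and $|v(r)|\le C(R-r)^{b_n}$, the same Gronwall argument gives $a_{n+1}=qb_n+2$ and $b_{n+1}=pa_n+2$, starting from $a_0=b_0=2$. The hypothesis $pq\ge 1$ then forces $a_n,b_n\to\infty$, so $u$ and $v$ vanish with infinite order at $r=R$. With such fast vanishing, the nonlinearities $|x|^\beta|v|^{q-1}v$ and $|x|^\alpha|u|^{p-1}u$ are smaller than any power of $R-r$, and a standard Gronwall-type uniqueness argument for the radial ODE system yields $u\equiv v\equiv 0$ on some $[R-\delta,R]$. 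To reach the whole interval, set $r^*=\inf\{\rho\in[0,R]:u\equiv v\equiv 0\text{ on }[\rho,R]\}$; by continuity $u(r^*)=v(r^*)=u'(r^*)=v'(r^*)=0$, and the same bootstrap (now starting from the easier case where both first derivatives already vanish) applies at $r^*$ and extends the nullity strictly to its left, contradicting the minimality unless $r^*=0$.
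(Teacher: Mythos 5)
Your plan diverges from the paper's proof (which is not a Caffarelli--Friedman bootstrap at all, but an Alexandrov--Bakelman--Pucci argument on thin annuli combined with a H\"opf-lemma/strong-maximum-principle dichotomy, cf.\ Proposition \ref{UCradial'}), and the step you yourself flag as delicate is in fact wrong. Smallness of $F_2(x,Dv,D^2v)$ near $r=R$ does \emph{not} force $v'(R)=0$: the radial eigenvalue $v''$ of $D^2v$ can exactly compensate the tangential eigenvalues $v'(r)/r$. For instance a radial harmonic function $v(r)=r^{2-N}-R^{2-N}$ (or a radial solution of the homogeneous Pucci equation) satisfies $F_2\equiv 0$, $v(R)=0$ and $v'(R)\neq 0$, so no case analysis on the sign of $v''(R)$ can produce the contradiction you claim; the structure condition \eqref{SC} only yields one-sided Pucci bounds, never pointwise smallness of $D^2v$. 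The information that rules out $v'(R)\neq 0$ is genuinely nonlocal and comes from the coupling: in the paper, if $R$ were an isolated zero from inside for both components, the maximum principle forces $u$ and $v$ to have the same sign on a maximal annulus and H\"opf's lemma gives $u'(R)v'(R)>0$, contradicting $u'(R)=0$; otherwise one component has zeros accumulating at $R$, and then the ABP inequality applied to both equations on a thin annulus $A_\eps$, whose measure makes $C_0^{p+1}M|A_\eps|^{(p+1)/N}\le 1/2$, shows $u,v\le 0$ there, whence the strong maximum principle gives $u\equiv v\equiv 0$ on $A_\eps$, and a continuation on the maximal annulus finishes.

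Even if $v'(R)=0$ were granted, your closing steps have gaps. When $q<1$ (allowed, since only $pq\ge 1$ is assumed) the nonlinearity $|v|^{q-1}v$ is not Lipschitz, so there is no ``standard Gronwall-type uniqueness'': infinite-order vanishing plus the differential inequalities gives, per Gronwall application, only finite-order smallness (e.g.\ $h\le C\rho^{1+q}h^{q}$ yields $h\lesssim \rho^{(1+q)/(1-q)}$, not $h\equiv 0$), and to conclude $u\equiv v\equiv 0$ near $R$ one must track the constants $C_{\alpha_k},C_{\beta_k}$ through the iteration as in Claim \ref{claim constant iteration}; that bookkeeping closes only for $pq>1$, whereas the theorem includes $pq=1$ --- this is exactly why the paper abandons the iteration scheme in the radial setting and uses ABP, where the smallness comes from $|A_\eps|^{1/N}$ rather than from superlinearity. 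Finally, your ODE reduction with pointwise bounds on $u''$, and the baseline $|u(r)|=O((R-r)^2)$, are not justified for solutions that are merely $C^1$ $L^m$-viscosity solutions; this can likely be repaired (two-sided viscosity bounds on the radial Hessian give $C^{1,1}_{\loc}$ away from the origin), but it needs an argument, and in any case it does not cure the two issues above.
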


Notice that the limit case $pq=1$ is included in our result, differently to what happen in Theorem \ref{UC general}.
In particular, by taking $F_1=F_2$, $\alpha=\beta$, $p=q$, and considering a solution of type $u=v$, Theorem \ref{UCradial} gives a UCP for scalar fully nonlinear equations with linear or superlinear reaction term. 

We also highlight that no regularity assumptions on $F_1,F_2$ are required in Theorem \ref{UCradial}, but differentiability on the solution pair $(u,v)$.
In what concerns sufficient regularity conditions, continuous (up to the boundary) viscosity solutions of \eqref{LE pucci} in fact belong to $C^{1}(\overline{\Omega})$, if for instance $F_i$, $i=1,2$, are continuous up to the boundary in the variable $x$.
Moreover, it is standard to achieve $W^{2,m}$ regularity, for any $m>N$,  in the presence of convexity or concavity assumptions on $F_i$ with respect to the Hessian entry.

Theorem \ref{UCradial} deserves some further comments. Up to our knowledge, our procedure is different from anything  in the literature, and it is based on the Alexandrov-Bakelman-Pucci (ABP) inequality. This is the key to obtain a radial version of UCP in such generality; see also our Proposition \ref{UCradial'} for a more general statement in radial domains. We stress that Caffarelli-Friedman type estimates used to prove Theorem~\ref{UC general} seem not available for operators in nondivergence form.

\medskip

Concerning applications, we highlight that unique continuation type arguments naturally appear as fundamental tools for establishing nonexistence of nontrivial solutions for the Lane-Emden system.
Here we investigate this question in the radial setting, when the pair $(p,q)$ lies on or over the critical hyperbola, namely
\begin{align}\label{supercritico}
\frac{N+\alpha}{p+1}+\frac{N+\beta}{q+1}\leq N-2,  \quad \alpha,\beta \geq 0, \ p,q>0 \quad (N \ge 3).
\end{align}

The corresponding scalar problem has been studied since the '60s, and it is known that in any starshaped domain the critical or supercritical Lane-Emden equation has no nontrivial solution. This is a consequence of the sole Poho\v{z}aev identity in the supercritical case, while for the critical regime it also relies on the boundary UCP. The case of the Lane-Emdem system has turned out to be difficult and, up to our knowledge, only nonexistence of positive or negative solutions are available; see for example  \cite[Proposition 3.1]{Mitidieri93}.

On the other hand, in the subcritical regime, which is the complementary set of \eqref{supercritico}, several authors proved existence of positive solutions for a general bounded regular domain; see for instance \cite{EDR2013} for a rather complete overview on the subject. Uniqueness of positive solution is known when $pq<1$ for general bounded smooth domains; see \cite[Theorem 4.1]{Montenegro}.
In the case of a ball, uniqueness of a positive solution with $\alpha=\beta=0$ follows from \cite{EG}, which complements the previous results obtained for $p,q\geq 1$ via combination of \cite{Troy} and \cite{Dalmasso}. Notice that uniqueness of positive solution cannot be true for general domains if $pq>1$, since multiplicity for an annulus was obtained in \cite{AnelNonuniq} in the scalar case.

In this work we show that nonexistence of nontrivial radial solutions occurs in the supercritical and critical regimes, \emph{independently of the sign of the solutions}, if the problem is posed in a ball.
Notice that \eqref{supercritico} implies the superlinearity \eqref{superlinear}.
Our nonexistence result is in the sequel.

\begin{teo}\label{Tnonexistence}
Let \eqref{supercritico} hold and $\Omega$ be a ball. Then the problem \eqref{LE} with $u=v=0$ on $\partial\Omega$ does not admit any nontrivial radial classical solution.
\end{teo}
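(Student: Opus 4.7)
Assume for contradiction that $(u,v)\not\equiv(0,0)$ is a nontrivial radial classical solution of \eqref{LE} in $B_R$ with $u=v=0$ on $\partial B_R$. My strategy is the classical one: derive a Pohožaev-type identity and close the argument with a unique continuation principle from earlier in the paper.

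The first step is the Pohožaev--Mitidieri identity. Multiplying the $u$-equation by $x\cdot\nabla v$, the $v$-equation by $x\cdot\nabla u$, summing and integrating by parts on $B_R$ using $u=v=0$ on $\partial B_R$ (so $\nabla u=(\partial_\nu u)\nu$ and $\nabla v=(\partial_\nu v)\nu$ there), and combining with the auxiliary testing identities
\[
\int_{B_R}\nabla u\cdot\nabla v\,dx \;=\; \int_{B_R}|x|^\alpha |u|^{p+1}\,dx \;=\; \int_{B_R}|x|^\beta |v|^{q+1}\,dx \;=:\; A,
\]
obtained by testing each equation against the opposite component, one arrives at
\[
\int_{\partial B_R}(x\cdot\nu)\,\partial_\nu u\,\partial_\nu v\,dS \;=\; A\left[\frac{N+\alpha}{p+1}+\frac{N+\beta}{q+1}-(N-2)\right].
\]
Radiality gives $x\cdot\nu=R$ on $\partial B_R$ and $\partial_\nu u=u'(R)$, $\partial_\nu v=v'(R)$, so this reduces to
\[
|\mathbb{S}^{N-1}|\,R^{N}\,u'(R)\,v'(R) \;=\; A\left[\frac{N+\alpha}{p+1}+\frac{N+\beta}{q+1}-(N-2)\right].
\]
Moreover $A>0$: if $A=0$, then $|x|^\alpha|u|^{p+1}\equiv 0$ in $B_R$ forces $u\equiv 0$, and the first equation of \eqref{LE} then forces $v\equiv 0$, contradicting nontriviality.

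In the critical case (equality in \eqref{supercritico}) the right-hand bracket is zero, forcing $u'(R)v'(R)=0$; up to relabeling $u'(R)=0$. Recalling that \eqref{supercritico} implies $pq>1$, Theorem \ref{UCradial} applied with $F_1=F_2=\Delta$ yields $u\equiv v\equiv 0$, a contradiction. In the supercritical case the bracket is strictly negative, hence the identity forces $u'(R)v'(R)<0$: both derivatives are nonzero of opposite sign. This is the essential difficulty: in the scalar case Pohožaev is already conclusive because $(\partial_\nu u)^2\ge 0$ directly contradicts a negative right-hand side, but for systems $\partial_\nu u\,\partial_\nu v$ has no sign a priori. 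I would close this case via the stronger radial UCP, \emph{Proposition \ref{UCradial'}} announced in the introduction as ``a more general statement in radial domains'': tailored to the superlinear radial setting, it should handle the overdetermined configuration $u(R)=v(R)=0$ without requiring either boundary derivative to vanish, and therefore produce $u\equiv v\equiv 0$ in the supercritical regime as well. The main obstacle -- and the genuinely new point relative to the scalar story -- is precisely this last step, where Pohožaev alone is not self-concluding and a strengthened unique continuation result must carry the proof.
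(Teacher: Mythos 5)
Your skeleton --- the Poho\v{z}aev--Mitidieri identity combined with the radial unique continuation results of Section \ref{section radial} --- is exactly the paper's route, and your treatment of the critical case is correct: the bracket vanishes, so $u'(R)v'(R)=0$, and since \eqref{supercritico} forces $pq>1$, Theorem \ref{UCradial} gives $u\equiv v\equiv 0$, contradicting nontriviality. The gap is in the supercritical case. You ask Proposition \ref{UCradial'} to conclude $u\equiv v\equiv 0$ from the boundary data $u(R)=v(R)=0$ alone, ``without requiring either boundary derivative to vanish.'' No such statement can hold: the condition $u(R)=v(R)=0$ is nothing but the Dirichlet boundary condition, the proposition carries no criticality assumption, and in the subcritical regime nontrivial radial solutions with exactly this data do exist. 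What Proposition \ref{UCradial'} actually asserts is a dichotomy: either $R$ is an isolated zero from one side for both components, in which case H\"opf's lemma yields $u'(R)\,v'(R)>0$, or else $u\equiv v\equiv 0$ in $\Omega$. It never produces triviality from the vanishing of $u,v$ at $\partial\Omega$ without a sign or derivative conclusion attached.

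The correct closing step --- and the one the paper uses --- is to extract from that dichotomy the sign information of Corollary \ref{corol DuDv>0}: a \emph{nontrivial} radial solution with $u=v=0$ on $\partial\Omega$ satisfies $\partial_\nu u\,\partial_\nu v>0$ on $\partial\Omega$. Your identity gives $|\mathbb{S}^{N-1}|R^{N}u'(R)v'(R)\le 0$ under \eqref{supercritico} (strictly negative in the supercritical case since $A>0$, zero in the critical case), which contradicts $u'(R)v'(R)>0$ at once; no case distinction is needed, and in particular the configuration $u'(R)v'(R)<0$ is ruled out by alternative $(i)$ of Proposition \ref{UCradial'}, not by a derivative-free unique continuation principle. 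So the ingredients you name are the right ones, but the mechanism you propose for the supercritical case is not what the proposition provides, and as written that step would fail.
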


The rest of the paper is organized as follows. We address at first the Lane-Emden systems  \eqref{LE} and \eqref{LE pucci} in the radial setting, in Section \ref{section radial}. We prove Theorem \ref{UCradial}, and then we use it as a crucial tool in the proof of Theorem~\ref{Tnonexistence}. Section \ref{section radial}, beyond being original in itself, also serves as motivation for the general case. This is treated in
Section \ref{section general UC pq>1}, where we prove Theorem \ref{UC general}, Corollary \ref{UCderivatives} and Theorem \ref{UCP fourth}. Section~\ref{section discussion} contains further discussions and applications.

\section{The radial setting}\label{section radial}

In this section we prove a UCP for the weighted Lane-Emden type system \eqref{LE pucci} in the radial regime, and exploit it for the nonexistence of solutions.
We are going to see that the radial case somehow features a rather precise control on the nodal sets of $u$ and $v$.

\smallskip

We first recall that the Pucci's extremal operators $\mathcal{M}^\pm_{\lambda,\Lambda}$ are defined at $X$ as the supremum and infimum of linear operators of the form $\mathrm{tr}(A(x)X)$, taken over all symmetric matrices $A$ such that $\lambda I \leq A(x)\leq \Lambda I$. Precisely, for any symmetric matrix $X$ whose spectrum is $\{e_i\}_{1\leq i \leq N}$,
\begin{align}\label{def Pucci}
\textrm{$\mathcal{M}^+_{\lambda,\Lambda}(X)=\Lambda \sum_{e_i>0} e_i +\lambda \sum_{e_i<0} e_i$, \;\;\; $\mathcal{M}^-_{\lambda,\Lambda}(X)=\lambda \sum_{e_i>0} e_i +\Lambda \sum_{e_i<0} e_i$}.
\end{align}

In what follows we set $B_t$ as the ball $\{ x \in \mathbb{R}^N;\, |x| < t\}$, and
\begin{center}
$\mathcal{L}^\pm_{\lambda,\Lambda} \,[u]=\mathcal{M}_{\lambda,\Lambda}^\pm (D^2 u)\pm \gamma |Du|$, \quad for $\gamma\geq 0$.
\end{center}

Next we briefly recall the definition of viscosity solution. For further details we refer to \cite{CCKS}.
Let $F$ satisfy \eqref{SC} and $h(x)\in L^m_{\textrm{loc}}(\Omega)$.
We say that $u\in C(\Omega)$ is an $L^m$-viscosity solution of ${F}(x,Du,D^2u)\ge h(x)$ (i.e.\ subsolution of $F=h$; respectively solution of ${F}(x,Du,D^2u)\le h(x)$ as supersolution of $F=h$) if whenever $\phi\in  W^{2,m}_{\mathrm{loc}}(\Omega)$, $\varepsilon>0$ and $\mathcal{O}\subset\Omega$ open are such that
\begin{align}\label{eq def visc}
{F}(x,D\phi(x),D^2\phi (x))-h(x)  \leq -\varepsilon \qquad \quad
(\textrm{resp.\ }  {F}(x,D\phi(x),D^2\phi (x))-h(x)  \geq \varepsilon \, )
\end{align}
for a.e. $x\in\mathcal{O}$, then $u-\phi$ cannot have a local maximum $($minimum$)$ in $\mathcal{O}$.

If ${F}$ and $h$ are continuous in $x$, and test functions $\varphi$ are taken in the space $C^2(\Omega)$, then \eqref{eq def visc} becomes an evaluation at a point, and we also say that $u$ is a $C$-viscosity sub or supersolution.
A \textit{solution} is always both sub and supersolution of the equation, and solutions of a Dirichlet problem are supposed to be continuous up to the boundary.
A \textit{strong} solution of ${F}(x,Du,D^2u)\ge h(x)$ belongs to $W^{2,m}_{\mathrm{loc}}(\Omega)$ and satisfies this inequality at almost every point; such notion is equivalent to the $L^m$-viscosity one if $u\in W^{2,m}_{\mathrm{loc}}(\Omega)$ for $m>N$, see \cite{CCKS}.

\smallskip

We will say so that \textit{$(u,v)$ is a viscosity solution  of the system} \eqref{LE pucci} if $u$ is an $L^m$-viscosity solution  of the first equation, and $v$ is an $L^m$-viscosity solution of the second one. If $F_i$, $i=1,2$, are continuous in $x$, then we understand it in the $C$-viscosity sense; and if in addition each $F_i$ is either convex or concave in the Hessian entry it will be implied in the strong sense.

\medskip

In view of the application of Theorem \ref{UCradial} to the nonexistence result, we need the following result, which is stronger than Theorem \ref{UCradial}.

\begin{prop}\label{UCradial'}
Let $\Omega$ be a ball or an annulus, and let $R>0$ such that $\partial B_R$ is contained concentrically in $\overline{\Omega}$.
Consider a radial viscosity solution $(u,v) \in C^1(\Omega \cup \partial B_R) \cap C(\overline{\Omega})$ of \eqref{LE pucci} with $\alpha, \beta \ge 0$, $p,q>0$, and $pq\ge 1$, satisfying the Dirichlet boundary condition $u = v = 0$ on $\partial \Omega$. Suppose that $u(R)=v(R)=0$. Then one of the following alternatives holds.
\begin{enumerate}[(i)]
\item $R$ is an isolated zero for both $u$ and $v$ from one side: in this case $u'(R) v'(R) >0$;
\item $R$ is not an isolated zero for either $u$ or $v$ from one side: in this case $u, v \equiv 0$ in $\Omega$.
\end{enumerate}
\end{prop}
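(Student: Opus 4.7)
The plan reduces the proposition to a single key claim: \emph{if $u'(R)=0$ (or, symmetrically, $v'(R)=0$), then $u\equiv v\equiv 0$ in $\Omega$}. Case (ii) is then immediate, since non-isolation of $R$ as a zero of $u$ from one side yields a sequence $r_n\nearrow R$ with $u(r_n)=0$, and the $C^1$-regularity of $u$ in $\Omega\cup\partial B_R$ forces $u'(R)=0$ (the difference quotient along $r_n$ is identically zero). Case (i) follows by taking the contrapositive, combined with a separate sign analysis that rules out $u'(R)v'(R)<0$.

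To establish the claim I would work in a one-sided annular collar $A_\delta=\{R-\delta<|x|<R\}\subset\Omega$ (the outer side being handled analogously) and combine the $C^1$ Taylor expansion $u(R-\delta)=-\delta u'(R)+o(\delta)$ with the Alexandrov-Bakelman-Pucci inequality applied to each equation to obtain
\[
\|u\|_{L^\infty(A_\delta)}\le C\delta|u'(R)|+C\delta^{2}\|v\|_{L^\infty(A_\delta)}^q,\qquad \|v\|_{L^\infty(A_\delta)}\le C\delta|v'(R)|+C\delta^{2}\|u\|_{L^\infty(A_\delta)}^p,
\]
valid for $\delta$ sufficiently small. The delicate step is upgrading $u'(R)=0$ to $v'(R)=0$. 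Given a sequence $r_n\nearrow R$ of zeros of $u$, in the Laplace case the first integral $r^{N-1}u'(r)=\int_r^R s^{N-1+\beta}|v|^{q-1}v\,ds$ (which uses $u'(R)=0$) combined with Fubini converts $u(r_n)=0$ into $\int_{r_n}^R g_n(t)|v|^{q-1}v\,dt=0$ for an explicitly positive kernel $g_n$; were $v'(R)\ne 0$, then $v$ would be single-signed on $(r_n,R)$ for $n$ large, making the integrand single-signed and the integral nonzero, a contradiction. For general Pucci operators the same conclusion is reached by ABP-comparisons with explicit radial test functions tailored to the structural inequality \eqref{SC}.

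Once $u'(R)=v'(R)=0$, the paired estimates close on themselves: substitution gives $\|u\|_{L^\infty(A_\delta)}\le C'\delta^{2+2q}\|u\|_{L^\infty(A_\delta)}^{pq}$, and continuity of $u$ at $R$ together with the superlinearity $pq\ge 1$ forces $\|u\|_{L^\infty(A_\delta)}=0$ for $\delta$ small; the first equation of \eqref{LE pucci} then forces $v\equiv 0$ on $A_\delta$ as well. Propagation to all of $\Omega$ is standard: at the boundary of the vanishing set the radial ODE has zero Cauchy data, so the same local argument reapplies. For the sign conclusion of case (i), if $u'(R)>0>v'(R)$ then $u<0<v$ on a one-sided strip $(R-\eta,R)$; the structural inequality \eqref{SC} makes $u$ a supersolution of $\mathcal{L}^-$ on the connected component of $\{v>0\}$ adjacent to $\partial B_R$, and a maximum-principle/Hopf argument contradicts $u<0$ inside. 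The main technical obstacle is precisely the upgrade $u'(R)=0\Rightarrow v'(R)=0$ for Pucci operators, where ABP-based comparisons must substitute for the explicit Laplace first integral.
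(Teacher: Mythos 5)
Your reduction of the proposition to the claim ``$u'(R)=0$ (or $v'(R)=0$) implies $u\equiv v\equiv 0$'' is logically admissible, but the proof you propose for that claim does not close, and the gap is precisely at the step the whole argument hinges on. With only $u'(R)=v'(R)=0$ at your disposal, the values of $u$ and $v$ on the inner sphere $|x|=R-\delta$ are merely $o(\delta)$, not zero; the ABP inequality on $A_\delta$ therefore produces estimates of the form $\|u\|_{L^\infty(A_\delta)}\le C\,|u(R-\delta)|+C\delta^{2}\|v\|_{L^\infty(A_\delta)}^q$ (and symmetrically), and your displayed inequalities silently discard these boundary terms by replacing $|u(R-\delta)|$ with $\delta|u'(R)|=0$. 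Once the $o(\delta)$ boundary contributions are kept, substitution only yields $\|u\|_{L^\infty(A_\delta)}\lesssim o(\delta)$, which is consistent with a nontrivial solution vanishing to second order at $R$ and gives no contradiction; so ``the paired estimates close on themselves'' is not justified, for any $pq\ge 1$. The paper's proof avoids exactly this obstruction: in case (ii) it takes the inner radius at an actual zero $r_{n_0}$ of $u$ (so $u$ has \emph{exact} zero data there), normalizes signs so that $v(R-\varepsilon)\le 0$ (hence only $v^+$ enters the one-sided ABP bound), concludes first only that $u,v\le 0$ in the thin annulus, and then uses the strong maximum principle together with the interior zeros $r_n$ to force $u\equiv 0$, whence $F_1(x,Du,D^2u)\equiv 0$ and $v\equiv 0$. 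Your scheme has no substitute for this mechanism. Relatedly, your ``delicate step'' upgrading $u'(R)=0$ to $v'(R)=0$ is internally inconsistent with the claim (it invokes the sequence $r_n$ of zeros of $u$, which is not implied by $u'(R)=0$), is only argued via a first integral available for the Laplacian, and for general operators satisfying \eqref{SC} the appeal to ``ABP-comparisons with explicit radial test functions'' is unsubstantiated; note that the paper needs no such upgrade at all.

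The sign analysis for case (i) is also not correct as sketched. If $u<0<v$ on a one-sided strip, then on the component of $\{v>0\}$ adjacent to $\partial B_R$ the structure condition \eqref{SC} gives $\mathcal{M}^-_{\lambda,\Lambda}(D^2u)-\gamma|Du|\le F_1(x,Du,D^2u)=-|x|^\beta v^q\le 0$, i.e.\ $u$ is a \emph{supersolution} there; but the minimum principle then only bounds $u$ from below by its boundary values, and on the inner boundary of that component (where $v=0$) the sign of $u$ is unknown, so no contradiction with $u<0$ follows. The paper's argument runs in the opposite direction: it works on a maximal annulus $A$ on which $u$ has a fixed sign, so that $u=0$ on \emph{both} components of $\partial A$; then, if $v$ had the opposite sign in $A$, $u$ would be a subsolution of $\mathcal{L}^+_{\lambda,\Lambda}$ with zero boundary data, contradicting its sign, and finally Hopf's lemma applied to both $u$ and $v$ in $A$ yields $u'(R)v'(R)>0$. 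In short: the dichotomy structure and the final continuation step are in the right spirit, but the two pillars — the ABP closure in case (ii) and the Hopf-type argument in case (i) — fail as written and need to be replaced by arguments of the type used in the paper (exact zero inner data at the radii $r_n$, one-sided ABP after a sign normalization, strong maximum principle, and the maximal-annulus/Hopf argument).
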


\begin{proof}
Say $\overline{\Omega} =\{x\in \overline{B}_t; \,s\leq |x|\leq t\}$, and w.l.o.g. assume $0\leq s<R\leq t$, since the case $R=s$ can be treated similarly. Consider $u,v\in C^1(\Omega \cup \partial B_R) \cap C(\overline{\Omega})$, and recall that $u(R)=v(R)=0$.

\smallskip

$(i)$ In the first case we assume that $r=R$ is an isolated zero for both $u$ and $v$ from one side.\\
With this, we mean that there exists $\eps >0$ such that either $u$ and $v$ have a strict sign (not necessarily the same) in $B_R\setminus \overline{B}_{R-\eps}$, or $u$ and $v$ have a strict sign (not necessarily the same) in $B_{R+\eps}\setminus \overline{B}_{R}$. Without loss of generality, we focus on the former alternative and consider a maximal annulus $A=B_R\setminus \overline{B}_{R-\delta}$ in which $u$ has defined sign (the case when $u$ does not change sign in $B_R \cap \Omega$ is also admissible, we simply let $A=B_R \cap \Omega$). Say $u>0$ in $A$; otherwise replace the pair $(u,v)$ by $(-u,-v)$. Indeed, notice that this is possible because $(-u,-v)$ solves \eqref{LE pucci} for some fully nonlinear uniformly elliptic operators $\tilde{F}_i$, $i=1,2$, where $\tilde{F}_i(x,p,X)=-F_i(x,-p,-X)$.
Then, $u=0$ on $\partial A$.
We also take a maximal annulus in which $v$ has defined sign. By changing if necessary the roles of $u$ and $v$, we can assume that this annulus for $v$ contains $A$. So, $u,v$ have a well defined sign in $A$, with $u=0$ on $\partial A$. If $v < 0$ in $A$, this would contradict the maximum principle applied to the boundary value problem satisfied by $u$, so $v>0$ in $A$.
Then, we can use the H\"opf Lemma in $A$ for both $u$ and $v$ (see \cite[Theorem 2.7]{B2016}) in order to conclude that $\partial_\nu u (R)<0$ and $\partial_\nu v (R)<0$, where $\nu$ is the exterior unit normal on $|x|=R$. Therefore,
$u'(R) v'(R) = \partial_\nu u (R)\cdot \partial_\nu v (R) >0$ in Case 1.

We also point out that this sign condition does not change if we suppose at the beginning that $u<0$ in $A$. Indeed, in this latter case we firstly have $v < 0$ in $A$ as well, whence we deduce $\partial_\nu u (R)>0$ and $\partial_\nu v (R)>0$.

\medskip

$(ii)$ We assume now that $r=R$ is not an isolated zero for either $u$ or $v$ from one side. \\
Without loss of generality, let $u$ do not have $r=R$ as an isolated zero. In this case, it follows that $u(R)=u^\prime (R)=0$. Indeed, there exists a sequence of radii $r_n\neq R$ converging to $R$ such that $u(r_n)=0$, which we can assume to be increasing. Then, by the mean value theorem, there exists $s_n\in (r_n,r_{n+1})$ such that $u^\prime (s_n)=0$. Since $u$ is differentiable up to $\partial B_R$, $u^\prime (R)=0$.

Now we address the following claim.
\begin{claim}\label{claimABP}
There exists $\varepsilon >0$ such that $u,v\equiv 0$ in $A_{\varepsilon}=B_R\setminus \overline{B}_{R-\epsilon}$.
\end{claim}

\begin{proof}
Set $M_1 = \max\{ \sup_{\Omega} (v^+)^{pq-1},\, \sup_{\Omega} (u^+)^{pq-1}\}$, $M_2 = \max\{ \sup_{\Omega} (v^-)^{pq-1},\, \sup_{\Omega} (u^-)^{pq-1}\}$, and $M = \max\{M_1,M_2\}$, which are well defined due to $u,v\in C(\overline{\Omega})$ and the fact that $pq \ge 1$ (if $pq=1$, we directly pose $M_1=M_2=M=1$). Notice that, if $M_1=0$, then $u,v \le 0$ in $\Omega$, and hence by the strong maximum principle (see \cite[Theorem 2.4]{B2016}) either $u,v<0$ in $\Omega$, or at least one between $u$ and $v$ vanishes identically. In the former case we reach a contradiction with the fact the existence of the sequence $\{r_n\}$; thus $u \equiv 0$ in $\Omega$, whence we deduce that $F_1(x,Du,D^2 u) \equiv 0$ in $\Omega$. In turn, by \eqref{LE pucci}, this implies $v \equiv 0$ as well, which completes the proof. The same argument works if $M_2=0$, and therefore we can focus on the case $M_1, M_2>0$ from now on.

Let $\varepsilon_0 \in (0, R-s)$ small enough such that
\begin{align}\label{choice epsilon0 radial}
C_0^{p+1} M |A_{\varepsilon_0}|^{(p+1)/N } \leq 1/2  \ \ \text{and} \ \ C_0^{q+1} M |A_{\varepsilon_0}|^{(q+1)/N } \leq 1/2,
\end{align}
where $C_0>0$ is a fixed constant only depending on $N$ and $t$ which will be specified ahead. Recalling that $r_n \to R^-$, there exists $n_0 \in \mathbb{N}$ such that $r_n\in (R-\varepsilon_0, R)$ for $n\geq n_0$. Let $\eps = R- r_{n_0}$.

\smallskip

At this point we may assume $v(R-\eps) \le 0$, which implies that $u,v\leq 0$ on $\partial A_{\varepsilon}$.
Observe that, if instead $v(R-\eps) >0$, we could consider the pair $(-u,-v)$, which is still a solution of a system of type \eqref{LE pucci} with $-v(r_{n_0})<0$ and such that $M_1,M_2>0$. In the sequel we show that both $u$ and $v$ are nonpositive in $A_{\varepsilon}$.
Notice that $u,v$ satisfy, in the viscosity sense,
\[
\begin{cases}
\,-\mathcal{L}^+_{\lambda,\Lambda} \,[u]\leq -F_1 (x,Du,D^2 u) = |x|^\beta |v|^{q-1}v \le t^{\beta}(v^+)^q \\
\,-\mathcal{L}^+_{\lambda,\Lambda} \,[v]\leq -F_2 (x,Dv,D^2 v) = |x|^\alpha |u|^{p-1}u \le t^{\alpha}(u^+)^p \end{cases}
\]
in $A_{\varepsilon}$, with $u,v\leq 0$ on $\partial A_{\varepsilon}$.
By applying the ABP estimate for both $u$ and $v$ in the scalar version (see \cite[Proposition 3.3]{CCKS} or \cite[Proposition 2.8]{KSite} for instance), we obtain
\begin{align}\label{abp u e v}
\sup_{A_{\varepsilon}} u\leq C_0\, |A_{\varepsilon}|^{1/N} \sup_{A_{\varepsilon}} (v^+)^q\; , \qquad \sup_{A_{\varepsilon}} v \leq C_0\, |A_{\varepsilon}|^{1/N} \sup_{A_{\varepsilon}} (u^+)^p,
\end{align}
where $C_0$ is the constant from ABP that depends only on $N$, $t$, $\alpha$ and $\beta$ (here we used that the constant in ABP inequality remains bounded if the coefficients and the diameter are bounded). If, by contradiction, there exists a point in $A_{\varepsilon}$ at which $u$ is positive, then the suprema $\sup_{A_{\varepsilon}} u$ and $\sup_{A_{\varepsilon}} u^+$ are equal. Thus, combining the two inequalities in \eqref{abp u e v} we obtain
\begin{align}\label{conta bonita}
\sup_{A_{\varepsilon}} v &\leq C_0^{p+1}  |A_{\varepsilon}|^{(p+1)/N }  \sup_{A_{\varepsilon}} (v^+)^{pq-1}\, \sup_{A_{\varepsilon}} v^+ \le C_0^{p+1}  |A_{\varepsilon}|^{(p+1)/N } M_1  \sup_{A_{\varepsilon}} v^+.
\end{align}
By the choice of $\varepsilon < \eps_0$ in \eqref{choice epsilon0 radial}, we infer that $v \leq 0$ in $A_{\varepsilon}$. Then, by the maximum principle applied to the boundary value problem satisfied by $u$, we get $u \leq 0$ in $A_{\varepsilon}$, which contradicts the assumption on the existence of a point in $A_{\varepsilon}$ at which $u$ is positive. A similar contradiction can be obtained supposing the existence of a point in $A_{\varepsilon}$ at which $v$ is positive.

Therefore $u,v \leq 0$ in $A_{\varepsilon}$. Now, since $-\mathcal{L}_{\lambda,\Lambda}^+[u]  \leq 0$ and $u\leq 0$ in $A_{\varepsilon}$, the strong maximum principle yields $u<0$ in $A_{\varepsilon}$ or $u\equiv 0$ in $A_{\varepsilon}$; but the first one cannot occur, since $\{r_n\}$ is an increasing sequence of zeros of $u$ converging to $R$. So $u\equiv 0$ in $A_{\varepsilon}$, which implies $F_1(x,Du,D^2 u) \equiv 0$ in $\Omega$, and in turn, by \eqref{LE pucci}, $v\equiv 0$ in $A_{\varepsilon}$. This finishes the proof of the claim.
\end{proof}

To sum up, so far we showed that if $R$ is not an isolated zero for $u$ or for $v$ from one side, then it is well defined and strictly positive the following quantity,
\begin{align}\label{def maximal}
{\varepsilon}^*=\sup \{\, \varepsilon \in (s,R) \textrm{ such that }  u,v\equiv 0 \textrm{ in } A_{\varepsilon}  \,\}.
\end{align}

Next we show that $\varepsilon^*=R-s$. For this, let us derive a contradiction from $\varepsilon^*<R-s$. Consider $R^*:=R-\varepsilon^*$, and observe that $u(R^*)=v(R^*)=0$, and by $C^1$ regularity $u^\prime (R^*)=v^\prime (R^*)=0$.
Now, case $(i)$ implies that $R^*$ is not an isolated zero from inside of $B_{R^*}(0)$ for either $u$ or $v$. As in Claim \ref{claimABP}, this fact implies that $u,v\equiv 0$ in $B_{R^*}(0)\setminus \overline{B}_{R^*-\epsilon_1}(0)$, for some $\varepsilon_1>0$. But this means that $u,v\equiv 0$ in $A_{\varepsilon^*+\varepsilon_1}$, which in turn contradicts the maximality of $\varepsilon^*$.

\smallskip

To conclude the proof, we have proved that, if $R>0$ is not an isolated zero for $u$ from inside, we have $u \equiv v \equiv 0$ in $\Omega \cap \overline{B}_R$, and $u'(R) = v'(R) = 0$. By Case 1, this implies that $R$ is not an isolated zero for either $u$ or $v$ also from outside. Adapting the previous argument in the annulus $B_t\setminus B_R$, this finally yields $u \equiv v \equiv 0$ in $\Omega$.
\end{proof}

As already observed, the previous lemma directly implies Theorem \ref{UCradial}. Moreover, an application of the homogeneous Dirichlet problem gives rise to the following corollary.

\begin{corol}\label{corol DuDv>0}
Let $\Omega$ be a ball or an annulus. If $(u,v)\in C^1(\overline{\Omega})$ is a nontrivial viscosity radial solution of \eqref{LE pucci} with $\alpha, \beta \ge 0$, $p,q>0$, $pq \ge1$, and $u=v=0$ on $\partial\Omega$, then $\partial_\nu u \cdot \partial_\nu v >0$ on $\partial\Omega$.
\end{corol}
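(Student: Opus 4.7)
The plan is to apply Proposition \ref{UCradial'} at each connected component of $\partial\Omega$ and use the nontriviality hypothesis to exclude the degenerate alternative. Since $(u,v)\in C^1(\overline{\Omega})$ and $u=v=0$ on $\partial\Omega$, the regularity requirement $(u,v)\in C^1(\Omega\cup\partial B_R)\cap C(\overline{\Omega})$ of Proposition \ref{UCradial'} is trivially satisfied with $R$ taken to be the radius of any boundary sphere of $\Omega$.

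First I would treat the case when $\Omega=B_R$ is a ball. Apply Proposition \ref{UCradial'} with this $R$: by hypothesis $u(R)=v(R)=0$, so one of the two alternatives must hold. Alternative (ii) would give $u\equiv v\equiv 0$ in $\Omega$, contradicting nontriviality. Hence alternative (i) is in force and yields $u'(R)\, v'(R)>0$. Since the outward unit normal on $\partial B_R$ is $\nu=x/|x|$, we have $\partial_\nu u(x)=u'(R)$ and $\partial_\nu v(x)=v'(R)$ for every $x\in\partial B_R$, and therefore $\partial_\nu u\cdot\partial_\nu v>0$ on $\partial\Omega$.

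For the annular case $\Omega=\{s<|x|<t\}$, I would apply Proposition \ref{UCradial'} separately at the outer radius $R=t$ and at the inner radius $R=s$ (the proposition explicitly covers both possibilities). At each of them, nontriviality again rules out alternative (ii) and forces (i), giving $u'(R)\, v'(R)>0$. The outward normal at $|x|=t$ is $\nu=x/|x|$ while at $|x|=s$ it is $\nu=-x/|x|$, but in either case $\partial_\nu u\cdot\partial_\nu v=(\pm u'(R))(\pm v'(R))=u'(R)\, v'(R)>0$, since the two sign factors always coincide and cancel out upon multiplication.

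The argument is essentially bookkeeping on top of Proposition \ref{UCradial'}; the only thing to check is that alternative (ii), if it held at any single boundary sphere, would indeed contradict nontriviality of $(u,v)$ on all of $\Omega$. This is immediate because that alternative asserts $u\equiv v\equiv 0$ in the whole of $\Omega$, not merely near the sphere in question. Consequently there is no substantial obstacle: the corollary follows at once.
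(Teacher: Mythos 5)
Your proposal is correct and follows exactly the route the paper intends: the corollary is stated there as a direct consequence of Proposition \ref{UCradial'} applied at the boundary sphere(s), with nontriviality excluding alternative (ii) and the sign of the outward normal cancelling in the product $\partial_\nu u\cdot\partial_\nu v=u'(R)v'(R)$. Your explicit bookkeeping for the inner boundary of the annulus (where $\nu=-x/|x|$) is the only detail the paper leaves implicit, and it is handled correctly.
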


In turn, this provides nonexistence of nontrivial radial solutions in the ball in the supercritical or critical regimes, as stated in Theorem \ref{Tnonexistence}.
Notice that viscosity solutions of the problem \eqref{LE} are in fact classical and differentiable up to the boundary.

\begin{proof}[Proof of Theorem \ref{Tnonexistence}]
Let $(u,v)$ be a nontrivial classical solution of \eqref{LE} with $u=v=0$ on $\partial\Omega$, and let $\Omega = B_R(0)$. Then, by applying in the radial setting the Poho\v{z}aev identity for systems -- see equation (3.4) in \cite{Mitidieri93}, or equation (2.13) in \cite{EDR2013}, we obtain
\begin{align}\label{eqSup}
R\int_{\partial\Omega} \pa_\nu u \cdot \pa_\nu v  \rmd S=\left\{ \frac{N+\alpha}{p+1}+\frac{N+\beta}{q+1} -(N-2)\right\} \int_{\Omega} |x|^{\alpha}|u|^{p+1}  \rmd x \leq 0\,,
\end{align}
where the inequality follows from \eqref{supercritico}. This is in contradiction with Corollary \ref{corol DuDv>0}.
\end{proof}

\section{UCP in the general case and its consequences}\label{section general UC pq>1}

In this section we prove some variations of unique continuation principles for systems in their general forms. To this end, we will use a fundamental lemma due to Caffarelli and Friedman \cite{CFJDE79, CFJDE85} which is stated in what follows.

We use the notation $[\beta]$ to mean the floor of $\beta$, i.e. $[\beta] =\max \{z\in \z ;\, z\leq \beta \}$. Moreover, set $\langle\beta\rangle=\min \{\beta-[\beta], 1+[\beta]-\beta \}$, which is the distance of $\beta$ to the set of integer numbers $\mathbb{Z}$. 

\begin{lem}[\cite{CFJDE79, CFJDE85}]\label{lemma CF}
Let $\beta_0>0$ and $\beta$ be a positive noninteger with $\beta \geq \beta_0$. Let $m>N$ and $v\in  W^{2,m}(B_1)$ be a function satisfying
\begin{align*}
|\Delta v| \leq C_\beta\, |x|^\beta\;\; \textrm{ in $B_1$, \quad with \;} C_\beta\geq 2^\beta.
\end{align*}
Then
$v(x)= P(x)+\Gamma (x)$ in $ B_1$,
where $P(x)$ is a harmonic polynomial of degree $[\beta]+2$, and
\begin{align*}
|\Gamma(x)| \leq CC_\beta \frac{\beta^{N-2}}{\langle\beta\rangle} |x|^{\beta +2}, \quad
|D\Gamma (x)|  \leq CC_\beta \frac{\beta^N}{\langle\beta\rangle} |x|^{\beta +1} \;\;\textrm{ in } B_1,
\end{align*}
for some constant $C$ depending only on $\beta_0$, $N$, $\|v\|_{L^\infty (\partial B_1)}$, and $\|Dv\|_{L^\infty (\partial B_1)}$.
\end{lem}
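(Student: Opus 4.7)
The plan is to decompose $v$ into a harmonic part plus a Green potential and analyze each piece via spherical harmonic expansion. First, let $w\in W^{2,m}(B_1)$ solve $\Delta w=\Delta v$ in $B_1$ with $w=0$ on $\partial B_1$, so that $h:=v-w$ is harmonic in $B_1$; since $h\in C^{1,\alpha}(\overline{B_1})$, it admits an expansion $h=\sum_{k\geq 0} H_k$ where $H_k$ is a homogeneous harmonic polynomial of degree $k$. The candidate decomposition is
\[
P := \sum_{k=0}^{[\beta]+2} H_k, \qquad \Gamma := w + \sum_{k>[\beta]+2} H_k = v - P,
\]
so that $P$ is a harmonic polynomial of degree $[\beta]+2$ and it only remains to bound $\Gamma$ and $D\Gamma$ by the stated quantities.

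For the Green-potential piece, I would expand $w(r\theta)=\sum_k w_k(r) Y_k(\theta)$ and $\Delta v(r\theta)=\sum_k f_k(r) Y_k(\theta)$ in spherical harmonics, using $|f_k(r)|\leq C_\beta r^\beta \|Y_k\|_\infty$ from the hypothesis. Each radial coefficient $w_k$ satisfies the Euler-type ODE
\[
w_k''(r) + \tfrac{N-1}{r} w_k'(r) - \tfrac{k(k+N-2)}{r^2} w_k(r) = f_k(r), \qquad w_k(1) = 0,
\]
whose homogeneous solutions are $r^k$ and $r^{2-N-k}$. A direct ansatz shows that a particular solution is essentially $r^{\beta+2}/D(k)$ modulo a spherical harmonic factor, where
\[
D(k) := (\beta+2)(\beta+N) - k(k+N-2) = (\beta+2-k)(\beta+N+k).
\]
Since $|\beta+2-k|\geq \langle\beta\rangle$ for every integer $k$, with equality for $k=[\beta]+2$ or $k=[\beta]+3$, and since $\beta+N+k\gtrsim\max(\beta,k)$, summing the modes and using the standard pointwise bound $\|Y_k\|_\infty\lesssim k^{(N-2)/2}\|Y_k\|_{L^2}$ yields $|w(x)|\lesssim C_\beta\,\beta^{N-2}\langle\beta\rangle^{-1}|x|^{\beta+2}$. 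This is the single source of both the $\langle\beta\rangle^{-1}$ factor (from resonance at two indices) and the $\beta^{N-2}$ factor (from spherical harmonic multiplicities of degree $\sim\beta$) in the final bound.

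The high-degree harmonic tail $\sum_{k>[\beta]+2} H_k$ is controlled analogously via Cauchy-type estimates for harmonic functions: $|H_k(x)|\lesssim k^{(N-2)/2}\|h\|_{L^\infty(\partial B_1)}|x|^k$, and for $k\geq[\beta]+3$ one has $|x|^k\leq|x|^{\beta+2}$ uniformly on compact subsets of $B_1$; summing over the multiplicity $O(k^{N-2})$ of spherical harmonics of degree $k$ gives another $C\beta^{N-2}|x|^{\beta+2}$ contribution. Differentiating the radial factors $r^k$, $r^{2-N-k}$ and $r^{\beta+2}$ gains an extra factor of order $\beta$ (or $k$), which upgrades $\beta^{N-2}$ to $\beta^N$ in the gradient estimate. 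The main obstacle I expect is the quantitative bookkeeping at the two resonant indices $k=[\beta]+2,[\beta]+3$ and the uniform control of the spherical harmonic series at high frequencies; once these are in hand, assembling the full estimate is a routine summation.
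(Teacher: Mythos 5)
Two remarks on context first: the paper does not prove Lemma \ref{lemma CF} at all --- it is quoted from Caffarelli--Friedman, whose proof runs through the Green representation formula in $B_1$ and expansions of the Green and Poisson kernels in spherical harmonics. Your plan is essentially an attempted reconstruction of that argument (split $v$ into a potential part and a harmonic part, expand in spherical harmonics, read off $\langle\beta\rangle^{-1}$ from the resonance at $k=[\beta]+2,[\beta]+3$ and $\beta^{N-2}$ from multiplicities near degree $\beta$), and those are indeed the right sources of the constants. But two steps fail as written. The first is the assembly of $P$ and $\Gamma$: with $w$ solving $\Delta w=\Delta v$ in $B_1$, $w=0$ on $\partial B_1$, the claimed bound $|w(x)|\lesssim C_\beta\,\beta^{N-2}\langle\beta\rangle^{-1}|x|^{\beta+2}$ is false. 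Imposing $w_k(1)=0$ forces homogeneous corrections $c_k r^k$ in every mode, and for $k\le[\beta]+2$ these are not $O(r^{\beta+2})$ near the origin. Concretely, if $\Delta v=C_\beta|x|^{\beta}$, then $w(x)=C_\beta\bigl(|x|^{\beta+2}-1\bigr)/\bigl((\beta+2)(\beta+N)\bigr)$, so $w(0)\neq0$, whereas your $\Gamma=w+\sum_{k>[\beta]+2}H_k$ must vanish to order $\beta+2$ at $0$. The polynomial $P$ has to absorb the degree-$\le[\beta]+2$ harmonic content generated by $w$ as well, not only the truncation of $h$; in Caffarelli--Friedman this happens automatically because $P$ is extracted from the kernel expansions of both the Newtonian potential and the boundary integral.

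The second, more serious, gap is summability: the pointwise bounds for $w$ and $Dw$ cannot be obtained by termwise estimation of the spherical-harmonic series. The coefficients $f_k(r)$ of $\Delta v(r\,\cdot)$ carry no decay in $k$ (only $|f_k(r)|\lesssim C_\beta r^\beta$), so with your bounds the degree-$k$ block contributes about $C_\beta r^{\beta+2}\,k^{N-2}\,k^{(N-2)/2}/\bigl(|\beta+2-k|(k+\beta+N)\bigr)$, and $\sum_k k^{3(N-2)/2-2}$ diverges for every $N\ge3$; even upgrading the crude count to the addition theorem plus Cauchy--Schwarz over the multiplicity index, the resulting series still diverges for $N\ge4$. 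One must instead integrate against the Green kernel with a near/far splitting in $|y|$ relative to $|x|$ (geometric convergence of the kernel expansion away from the diagonal, direct size estimates near it), or use an equivalent device such as dyadic annuli combined with elliptic estimates; absolute mode-by-mode summation is simply not available. Two smaller points in the same direction: the ``particular solution $r^{\beta+2}/D(k)$'' presumes $f_k$ is a pure power, so you must argue via variation of parameters (which is precisely what produces the boundary-condition terms causing the first gap); and your tail estimate is only claimed ``uniformly on compact subsets of $B_1$,'' while the lemma asserts the bound in all of $B_1$ --- this is where the normalization $C_\beta\ge2^\beta$ enters (for $|x|\ge1/2$ the right-hand side is bounded below, so a uniform bound on $\Gamma$ suffices), a point the proposal leaves unaddressed.
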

In Theorem \ref{UC general}, the fact that $u,v\in W^{2,m}_{\mathrm{loc}}(\Omega)$, $m >N$ is used to apply Caffarelli-Friedman Lemma \ref{lemma CF}; see the proofs of \cite[Lemma 3.1, for $N=3$]{CFJDE79} and \cite[Lemma 1.1]{CFJDE85}, where $W^{2,m}$ estimates are employed to get $C^1(\overline{B}_1)$ regularity and enable a Green's representation formula.

We shall set up an iterative scheme based on Lemma \ref{lemma CF}. In order to control the iterations, we use the following statement.

\begin{lem}\label{lemma iterations}
Let $0<q<1<p$ with $pq>1$, and let $\widehat{c}\in (0,\min\{\langle q \rangle,1/10\}]$. Then there exist sequences $\vartheta_k , \gamma_k\in [0,1)$ such that, by defining
\begin{align}
\alpha_1=q, \quad \beta_k =(\alpha_k +2)p - \gamma_k, \quad\alpha_{k+1} = (\beta_{k}+2)q-\vartheta_k, \quad \textrm{ for all }\, k\in\n,
\end{align}
we have
\begin{enumerate}[(i)]
\item $\alpha_k , \;\beta_k$ are increasing in $k$;

\item $\langle\alpha_k \rangle, \langle \beta_k \rangle \geq \widehat{c} \; $ for all $\;k\in \n\;\;($in particular $\alpha_k , \beta_k \not\in \n)$;

\item $A(pq)^k\leq \alpha_k , \, \beta_k \leq B(pq)^k$, for some $A,B>0$ and all $k\in \n$. In particular, $\alpha_k,\beta_k\rightarrow \infty$.
\end{enumerate}
\end{lem}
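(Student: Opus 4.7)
The plan is to define $\alpha_k$ and $\beta_k$ recursively via the given formulas, treating $\gamma_k, \vartheta_k \in [0,1)$ as free parameters to be fixed at each stage so as to keep the resulting $\beta_k$ and $\alpha_{k+1}$ at distance at least $\widehat{c}$ from $\z$. Once this is secured, monotonicity (i) and geometric growth (iii) follow automatically from the superlinearity $pq > 1$ together with $p > 1 > q > 0$.

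For the base case, $\alpha_1 = q$ satisfies $\langle \alpha_1 \rangle = \langle q \rangle \geq \widehat{c}$ by the standing hypothesis on $\widehat{c}$, and in particular $\alpha_1 \notin \z$. For the inductive step, given $\alpha_k > 0$ with $\langle \alpha_k \rangle \geq \widehat{c}$, I would choose $\gamma_k \in [0,1)$ so that $\beta_k := (\alpha_k + 2)p - \gamma_k$ satisfies $\langle \beta_k \rangle \geq \widehat{c}$. Such a $\gamma_k$ exists because, as $\gamma$ ranges over $[0,1)$, the value $(\alpha_k + 2)p - \gamma$ traces an interval of length $1$, and the $\widehat{c}$-neighborhood of $\z$ meets any such interval in a set of measure at most $2\widehat{c} \leq 1/5 < 1$; its complement in the interval is therefore nonempty. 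An identical argument produces $\vartheta_k \in [0,1)$ making $\langle \alpha_{k+1} \rangle \geq \widehat{c}$. This establishes (ii).

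For monotonicity (i), substituting $\beta_k$ into the recursion yields
\begin{equation*}
\alpha_{k+1} - \alpha_k = (pq-1)\alpha_k + (2p + 2 - \gamma_k)q - \vartheta_k.
\end{equation*}
Since $\gamma_k, \vartheta_k \in [0,1)$, the tail is bounded below by $(2p+1)q - 1 = 2pq + q - 1 > 1$, using $pq > 1$; combined with $(pq-1)\alpha_k > 0$ this gives $\alpha_{k+1} - \alpha_k > 1$. Writing $\beta_{k+1} - \beta_k = p(\alpha_{k+1} - \alpha_k) + (\gamma_k - \gamma_{k+1})$, the first term is strictly bigger than $p > 1$ and the correction lies in $(-1,1)$, so $\beta_{k+1} > \beta_k$ as well.

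For the exponential growth bound (iii), I would rewrite the recursion as $\alpha_{k+1} = pq\,\alpha_k + c_k$, where $c_k := (2p + 2 - \gamma_k)q - \vartheta_k$ is uniformly trapped between $c_- := (2p+1)q - 1 > 0$ and $c_+ := (2p+2)q$. Unrolling gives $\alpha_k = (pq)^{k-1} q + \sum_{j=1}^{k-1}(pq)^{k-1-j} c_j$, and the combination $pq > 1$ with the two-sided bound $0 < c_- \leq c_k \leq c_+$ sandwiches $\alpha_k$ between $A(pq)^k$ and $B(pq)^k$ for suitable constants. The corresponding bound for $\beta_k$ then follows from $p\alpha_k < \beta_k \leq p(\alpha_k + 2)$. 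The only mildly subtle point, and the only place where I expect any friction, is verifying that the increments remain uniformly positive regardless of how $\gamma_k, \vartheta_k$ fall in $[0,1)$; the slack furnished by $pq > 1$ is precisely what makes this go through.
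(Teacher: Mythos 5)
Your argument is correct, and it follows the same overall scheme as the paper's: define the sequences recursively, perturb by $\gamma_k,\vartheta_k\in[0,1)$ at each step so that $\beta_k$ and $\alpha_{k+1}$ stay at distance at least $\widehat{c}$ from $\mathbb{Z}$, and then read off (i) and (iii) from the recursion together with $pq>1$. The two places where you deviate are both sound and, if anything, tidier. For (ii), the paper picks $\vartheta_k$ (and $\gamma_k$) explicitly from the three values $0,\widehat{c},2\widehat{c}$ by a short case analysis on where $(\beta_k+2)q$ sits relative to $\mathbb{Z}$, whereas you argue nonconstructively: as $\gamma$ runs over $[0,1)$ the value $(\alpha_k+2)p-\gamma$ sweeps an interval of length one, and the $\widehat{c}$-neighbourhood of $\mathbb{Z}$, being $1$-periodic with measure $2\widehat{c}\le 1/5$ per period, cannot cover it; your version uses the hypothesis $\widehat{c}\le 1/10$ only through $2\widehat{c}<1$, while the paper's explicit choice is what one would retain if the perturbations had to be tracked quantitatively later. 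For (iii), the paper gets the upper bound by a logarithmic iteration (estimating $\log\alpha_{k+1}\le\log(pq)+\log\alpha_k+4/(A(pq)^k)$ and summing), whereas you solve the affine recursion $\alpha_{k+1}=pq\,\alpha_k+c_k$ exactly and bound the geometric sum via $0<c_-\le c_k\le c_+$; this is more elementary and yields the two-sided bound in one stroke, with the bounds for $\beta_k$ following from $p\alpha_k<\beta_k\le p(\alpha_k+2)$ as you note. The auxiliary estimates you invoke all check out: $(2p+1)q-1=2pq+q-1>1$ because $pq>1$, hence $\alpha_{k+1}-\alpha_k>1$ and $\beta_{k+1}-\beta_k>p-1>0$, matching the paper's computation for (i).
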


\begin{proof}
Firstly we show that $(i)$ holds for any choice of $\vartheta_k, \gamma_k\in [0,1)$.
Indeed, for $k\in \n$,
\begin{align*}
\alpha_{k+1}=((\alpha_k +2)p-\gamma_k +2)q-\vartheta_k= (\alpha_k +1)pq +pq-\vartheta_k +(2-\gamma_k)q > \alpha_k,
\end{align*}
since $pq>1>\vartheta_k$ and $\gamma_k<1$, and analogously for $\beta_k$.

Regarding $(ii)$, for a fixed $k\geq 1$, let us consider $x:=(\beta_k +2)q$. If $\langle x \rangle \geq \widehat{c}$, we choose $\vartheta_k=0$.
Assume then that $\langle x \rangle <\widehat{c}$. We have two cases.
If $x-[x]< \widehat{c}$, we choose $\vartheta_k=2\widehat{c}$. Surely, $[x-2\widehat{c}\,]=[x]-1$, and
$x-2\widehat{c}-[ x-2\widehat{c}\,]=x-[x]+1-2\widehat{c} \geq 1-2\widehat{c} > \widehat{c}$, and
$1+[ x-2\widehat{c}\,]-x+2\widehat{c}=[x]-x+2\widehat{c}>\widehat{c}$.
If instead $x-[x]\geq \widehat{c}$, then $[x]+1-x < \widehat{c}$, and we can choose $\vartheta_k = \widehat{c}$ and argue as before.
A similar argument is used to construct $\gamma_k$.

For $(iii)$, we start noting that $\alpha_1=q = A pq$, for $A = 1/p$. Further, if $\alpha_k \geq A(pq)^k$, then
$\alpha_{k+1}=\alpha_k pq +(2-\gamma_k)q+2pq-\vartheta_k \geq A (pq)^{k+1}$.
Analogously, $\beta_1=pq+2p-\gamma_1>pq > A pq$ since $p>1$, and so on.
On the other hand,
$
\alpha_{k+1}\leq (\beta_k+2)q \leq (\alpha_k +2)pq +2q \leq (\alpha_k+4)pq.
$
Thus, using that $\mathrm{log}\,(\alpha_k+4) \leq \mathrm{log}\, \alpha_k +\frac{4}{\alpha_k}$, we obtain
\begin{align*}
\mathrm{log}\, \alpha_{k+1} \leq \mathrm{log} (pq) +\mathrm{log}\, \alpha_k +\frac{4}{A(pq)^k}.
\end{align*}
Iterating these estimates, and using that $pq>1$, we infer that
\begin{align*}
\mathrm{log}\, \alpha_{k+1} \leq k\, \mathrm{log} (pq) +\mathrm{log}\, \alpha_1 +\sum_{i=1}^{k}\frac{4}{A(pq)^i}
\leq (k+1)\,\mathrm{log}(pq) +C,
\end{align*}
and then $\alpha_k \leq e^C (pq)^k$ for all $k\in\n$.
Also, $\beta_k \leq (\alpha_k+2)q \leq (e^C+2)q (pq)^k \leq B (pq)^k$ for all $k\in\n$ by using again that $pq>1$.
\end{proof}

\begin{proof}[Proof of Theorem \ref{UC general}.]
Recall that we consider a solution pair $(u,v)$ to \eqref{gen syst} with $u,v\in W^{2,m}_{\mathrm{loc}}$, under the assumptions \eqref{growth} and \eqref{superlinear}.
For the sake of simplicity, we understood all inequalities ahead in the almost everywhere sense without explicit writing ``a.e." each time.
Up to translation, scaling, and using  a Sobolev embedding, we can suppose $\Omega \supset \supset B_1$, and that $u,v \in W^{2,m}(B_1)\cap C^1(\overline{B}_1)$ vanish with infinite order at $0$. By \eqref{growth}, we have
\begin{equation}\label{ineq deltas}
|\Delta u| \le   C_0 |v|^q, \quad |\Delta v| \le C_0 |u|^p,
\end{equation}
for a positive constant $C_0$, which we can assume to be greater than $1$.
Now, from $u,v\in C^1(\overline{B}_1)$ we have
$|v(x)|\leq \bar{C}|x|$ in $B_1$.
Here $\bar{C}$ denotes the maximum between the Lipschitz constants of $u$ and $v$, and the constants from Lemma \ref{lemma CF} for $u$ and $v$.

Notice that, since $pq>1$, then at least one between $p,q$ is greater than one. By changing the roles of $u$ and $v$ we may suppose $p>1$. On the other hand, about $q$ we first assume $q<1$ and consider the notation on $\alpha_k,\beta_k$ from Lemma \ref{lemma iterations}. By \eqref{ineq deltas} we derive
\begin{align}\label{step 1}
|\Delta u| \le C_0 |v|^q \leq C_{\alpha_1} |x|^{\alpha_1} \quad \textrm{in $B_1$},
\end{align}
where $C_{\alpha_1}= \max\{2^{\alpha_1}, C_0\bar{C}^q\}$. Thus, by Lemma \ref{lemma iterations}, we obtain $u(x)=P_1(x)+\Gamma_1(x)$, where $P_1$ is a harmonic polynomial of degree $[q]+2=2$, and
\[
|\Gamma_1(x)|\leq \frac{\bar{C}}{\langle \alpha_1 \rangle} C_{\alpha_1} \alpha_1^{N-2}|x|^{\alpha_1+2} \le \widetilde{C} C_{\alpha_1} \alpha_1^{N-2}|x|^{\alpha_1+2},
\]
with $\widetilde{C}\,\widehat{c}\geq \bar{C} $ for $\widehat{c}$ as in Lemma \ref{lemma iterations} $(ii)$.
Since $u$ vanishes with infinite order at $0$, we must have $P_1\equiv 0$. Hence $u=\Gamma_1$ and so, using again \eqref{ineq deltas},
\begin{align*}
|\Delta v| \le C_0 |u|^p\leq C_{\beta_1} |x|^{\beta_1},
\end{align*}
where $C_{\beta_1}\geq \max \{2^{\beta_1}, C_0 \widetilde{C}^p \alpha_1^{p(N-2)}C_{\alpha_1}^p\}$. Now, by the combination of Lemma \ref{lemma CF} and the vanishing of infinite order of $v$ at $0$, we obtain
\begin{align*}
|v(x)|\leq \bar{C} C_{\beta_1} \frac{\beta_1^{N-2}}{\langle \beta_1 \rangle} |x|^{\beta_1+2}\leq \widetilde{C} C_{\beta_1} \beta_1^{N-2} |x|^{\beta_1+2}.
\end{align*}
Iterating the previous argument, we deduce the existence of constants $C_{\alpha_k}, C_{\beta_k}>0$ such that
\begin{align*}
&|\Delta u(x)|\leq C_{\alpha_k} |x|^{\alpha_k}, \qquad |u(x)|\leq \widetilde{C}C_{\alpha_k} \alpha_k^{N-2} |x|^{\alpha_k+2},\\
&|\Delta v(x)|\leq C_{\beta_k} |x|^{\beta_k}, \qquad\, |v(x)|\leq \widetilde{C}C_{\beta_k} \beta_k^{N-2} |x|^{\beta_k+2},
\end{align*}
where $C_{\beta_{k}} \geq \max \{ 2^{\beta_{k}}, C_0 (\widetilde{C} C_{\alpha_k} \alpha_k^{N-2})^p\}$, and $C_{\alpha_{k+1}} \geq \max \{ 2^{\alpha_{k+1}}, C_0 (\widetilde{C} C_{\beta_k} \beta_k^{N-2})^q \}$.

\begin{claim}\label{claim constant iteration}
There exists $K_0\geq 1$, $K\geq 2$, and $\sigma\gg 1$ such that we can pick out
$C_{\alpha_k}=C_0K_0 K^{(pq)^k D_k}$, with $D_k=\sigma\sum_{i=1}^{k} \frac{i}{(pq)^i}$, and $C_{\beta_k}=C_0(\widetilde{C}C_{\alpha_k} \alpha_k^{N-2})^p$.
\end{claim}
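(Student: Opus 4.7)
My plan is to verify Claim 3.3 by induction on $k\geq 1$, after clarifying which inequalities actually need to hold. Since $C_{\beta_k}$ is defined by $C_{\beta_k}=C_0(\widetilde C C_{\alpha_k}\alpha_k^{N-2})^p$, the constraint $C_{\beta_k}\geq\max\{2^{\beta_k},C_0(\widetilde C C_{\alpha_k}\alpha_k^{N-2})^p\}$ collapses to the single requirement $C_{\beta_k}\geq 2^{\beta_k}$; the nontrivial step is to verify
\[
C_{\alpha_{k+1}}=C_0K_0 K^{(pq)^{k+1}D_{k+1}} \;\geq\; C_0\bigl(\widetilde C C_{\beta_k}\beta_k^{N-2}\bigr)^q,
\]
together with $C_{\alpha_{k+1}}\geq 2^{\alpha_{k+1}}$ and the base case $C_{\alpha_1}\geq\max\{2^{\alpha_1},C_0\bar C^q\}$. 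The algebraic mechanism behind the peculiar shape of $D_k$ is the identity
\[
(pq)^{k+1}(D_{k+1}-D_k)=\sigma(k+1),
\]
which means that every inductive step contributes exactly $\sigma(k+1)$ to the exponent of $K$. This linear-in-$k$ budget is precisely what is needed to absorb the penalty $\alpha_k^{pq(N-2)}\beta_k^{q(N-2)}$: by Lemma \ref{lemma iterations} this factor is bounded by $B^{(q+pq)(N-2)}(pq)^{k(q+pq)(N-2)}$, whose logarithm is linear in $k$ with slope $(q+pq)(N-2)\log(pq)$.

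For the inductive step, I would expand
\begin{align*}
C_0\bigl(\widetilde C C_{\beta_k}\beta_k^{N-2}\bigr)^q &= C_0^{1+q+pq}\widetilde C^{q+pq}K_0^{pq}\alpha_k^{pq(N-2)}\beta_k^{q(N-2)}K^{(pq)^{k+1}D_k},
\end{align*}
so that, after dividing both sides of the desired inequality by $C_0K_0K^{(pq)^{k+1}D_k}$ and using the identity above, the problem reduces to showing
\[
K^{\sigma(k+1)} \;\geq\; C_0^{q+pq}\widetilde C^{q+pq} K_0^{pq-1}\,\alpha_k^{pq(N-2)}\beta_k^{q(N-2)}.
\]
Taking logarithms, bounding $\alpha_k,\beta_k\leq B(pq)^k$, and setting $K_0=1$ (any fixed value in $[1,\infty)$ works, with $K_0$ treated as a constant absorbed later), this further reduces to
\[
\sigma(k+1)\log K \;\geq\; k(q+pq)(N-2)\log(pq) + C',
\]
with $C'$ depending only on $p,q,N,B,C_0,\widetilde C$. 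The estimate holds uniformly in $k\geq 1$ as soon as $\sigma\log K$ exceeds $(q+pq)(N-2)\log(pq)$ by a fixed positive amount and is large enough to dominate $C'$. The remaining conditions $C_{\alpha_{k+1}}\geq 2^{\alpha_{k+1}}$ and $C_{\beta_k}\geq 2^{\beta_k}$ follow from $\alpha_k,\beta_k\leq B(pq)^k$ and $D_k\geq\sigma/(pq)$, amounting to further lower bounds on $\sigma\log K$; similarly, the base case $C_{\alpha_1}=C_0K_0K^\sigma\geq\max\{2^{\alpha_1},C_0\bar C^q\}$ is a single additional lower bound on $\sigma$.

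The argument is essentially bookkeeping: there is no hidden analytical content once one recognises the identity $(pq)^{k+1}(D_{k+1}-D_k)=\sigma(k+1)$, which is exactly why the sum defining $D_k$ is weighted by $i/(pq)^i$. The main obstacle is getting the order of the choices right, especially because $K_0^{pq-1}$ appears on the unfavourable side of the induction (as $pq>1$): first fix $K_0\geq 1$ (taking $K_0=1$ is simplest), then choose $\sigma$ large and $K\geq 2$ so that $\sigma\log K$ simultaneously beats the slope $(q+pq)(N-2)\log(pq)$ and all the additive constants produced by the base case and by the $k$-independent terms. Once these three parameters are fixed in this order, a routine induction closes the claim.
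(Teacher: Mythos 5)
Your proposal is correct and follows essentially the same route as the paper: it verifies the same three recursive inequalities (the lower bounds by $2^{\alpha_k}$, $2^{\beta_k}$ and the step $C_{\alpha_{k+1}}\geq C_0(\widetilde{C}C_{\beta_k}\beta_k^{N-2})^q$), exploits the identity $(pq)^{k+1}(D_{k+1}-D_k)=\sigma(k+1)$ together with the bounds $\alpha_k,\beta_k\leq B(pq)^k$ from Lemma \ref{lemma iterations}, and then fixes $K_0$, $K$, $\sigma$ in a consistent order. The only cosmetic difference is that you close the key estimate by comparing logarithmic slopes in $k$, whereas the paper takes $(k+1)$-th roots; these are equivalent, and your explicit handling of the base case constant $\bar{C}$ is if anything slightly more careful.
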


In order to prove the claim, we have to verify that
\begin{enumerate}[(a)]
\item $C_0K_0 K^{(pq)^k D_k} \geq 2^{\alpha_k}$;

\item $C_0(\widetilde{C} K_0 K^{(pq)^k D_k} \alpha_k^{N-2} )^p \geq 2^{\beta_k}$;

\item $K_0 K^{(pq)^{k+1} D_{k+1}}\geq
(\widetilde{C} (C_0\widetilde{C}K_0 K^{(pq)^k D_k}\alpha_k^{N-2})^p \beta_k^{N-2} )^q$,
\end{enumerate}
for all $k\in \n$, provided $K_0, K, \sigma$ are sufficiently large.

\medskip

Observe that $D_k\geq \frac{\sigma}{pq}$. Then, for every $K \ge 1$, we have that
$K_0 K^{(pq)^k D_k}\geq K_0 K^{(pq)^k \frac{\sigma}{pq}} $.
On the other hand, by Lemma \ref{lemma iterations} $(iii)$, we have $2^{\alpha_k}\leq 2^{B(pq)^k}$. Thus, $(a)$ holds if
$C_0 K_0 K^{(pq)^k \frac{\sigma}{pq}}> 2^{B(pq)^k}$, which is possible for instance if $K_0>1$, $K>2$, and $\sigma> Bpq$.

For $(b)$, notice that $pq>1$ and Lemma \ref{lemma iterations} $(iii)$ imply that
\begin{center}
$\widetilde{C}^p K_0^p K^{p(pq)^k D_k} \alpha_k^{p(N-2)}
\geq  \widetilde{C}^p K_0^p K^{p(pq)^k \frac{\sigma}{pq}}  A^{p(N-2)} (pq)^{kp(N-2)} \ge \widetilde{C}^p K_0^p K^{p(pq)^k \frac{\sigma}{pq}}  A^{p(N-2)}$,
\end{center}
and $2^{\beta_k}\leq 2^{B(pq)^k}$.
Then $(b)$ holds if
$C_0\widetilde{C}^p K_0^p K^{(pq)^k \frac{\sigma}{q} } A^{p(N-2)}\geq 2^{B(pq)^k}$; and we can simply choose $\sigma>Bq$, $K>2$, and $K_0$ large enough depending on $A,p,q$.

Finally, the r.h.s.\ of $(c)$ is less than or equal to
\begin{center}
$C_0^{pq}\widetilde{C}^{(p+1)q} K_0^{pq} K^{(pq)^{k+1} D_k}\alpha_k^{pq(N-2)} \beta_k^{q(N-2)}\leq C_0^{pq}
\widetilde{C}^{(p+1)q} K_0^{pq} K^{(pq)^{k+1} D_k}\left(B (pq)^k\right)^{(p+1)q(N-2)} $.
\end{center}
The latter is less than or equal to $K_0 K^{(pq)^{k+1} D_{k+1}}$ if
$$C_0^{pq} \widetilde{C}^{(p+1)q} K_0^{pq-1} B^{(p+1)q(N-2)} (pq)^{k(p+1)q(N-2)} \leq K^{(pq)^{k+1} (D_{k+1}-D_k)}=K^{\sigma (k+1)},$$
which, by taking the $(k+1)$ roots, is implied by
$C_0^{pq} \widetilde{C}^{(p+1)q} K_0^{pq-1} B^{(p+1)q(N-2)} (pq)^{(p+1)q(N-2)}  \leq K^{\sigma }$.
Taking any $K_0>1$ and $K >2$, it is possible to choose a sufficiently large $\sigma$ in such a way that this holds. This completes the proof of the claim.

Now, using Claim \ref{claim constant iteration}, and the fact that $|x|\leq 1$, we obtain that, for any $k$,
\begin{align*}
|u(x)| \leq C_0\widetilde{C} B^{N-2} (pq)^{k(N-2)}K_0 K^{(pq)^k D }\,|x|^{A(pq)^k},
\end{align*}
where $D=\lim_k D_k<+\infty$, since $pq>1$. Hence, if we take $|x|<K^{- \frac{pq D}{A}} \Leftrightarrow K^D< |x|^{- \frac{A}{pq}}$, then
\begin{align*}
|u(x)| \leq C (pq)^{k(N-2)}|x|^{A(pq)^k(1-{1}/{pq})}=\mathrm{exp} \left\{\log C + k \,\mathrm{log}\, ((pq)^{N-2})+A \left(1-\frac1{pq}\right) (pq)^k \mathrm{log}\,|x| \right\}.
\end{align*}
Thus, by passing to the limit as $k\rightarrow \infty$, we deduce that $|u(x)|\leq C e^{-\infty}=0$, i.e.\
\begin{align}\label{u=0 in Br}
\textrm{$u\equiv 0$\;\; in the ball $B_{\bar r} (0)$, with $\bar r=K^{- \frac{pq D}{A}}$}.
\end{align}
Up to now, we supposed that $q<1$. The case $p>1$ and $q \ge 1$ is actually simpler, and in any case can be reduced to $q<1$ in the following way: if $q\geq 1$, we write $q=q_1+q_2$, where $q_1\in (0,1)$ is such that $pq_1>1$.
Thus, using $|x|\leq 1$, equation \eqref{step 1} becomes
\begin{align*}
|\Delta u| \le C_0 |v|^q \leq C_0 \bar{C}^q |x|^{q_1} \quad \textrm{in } B_1.
\end{align*}
Then we apply Lemma \ref{lemma iterations} with $q$ replaced by $q_1$.
For the second step, since $|x|\leq 1$,
\begin{align*}
|\Delta u| \le C_0 |v|^q \leq C_0 (\widetilde{C}C_{\beta_1} \beta_1^{N-2})^q |x|^{(\beta_1+2)q_1} \quad \textrm{in } B_1,
\end{align*}
and so on. For the iteration procedure we only need a bit care with the constants, which remain the same for the original $q$. But this is just a question of using $pq\geq pq_1$. The rest of the proof carries on the same way in order to obtain \eqref{u=0 in Br} for general $q>0$ satisfying $pq>1$.

Notice that, by \eqref{u=0 in Br} and \eqref{growth}, we have
\begin{equation}\label{from u to v}
-\Delta v = g(x,0,v) =0 \quad \text{in $B_{\bar r}(0)$, and $v$ vanishes with infinite order at $0$}.
\end{equation}
Thus, regularity and UCP for harmonic functions imply that $v \equiv 0$ in $B_{\bar r}(0)$ as well.

\medskip

To complete the proof of UCP we perform in the sequel a usual connectedness argument.
Set
\begin{center}
$\mathcal{N}=\{x\in \Omega; \, u \equiv v \equiv 0 $ in a neighborhood of $x\in \Omega\}.$
\end{center}
Then $\mathcal{N}\neq \emptyset$ by the previous discussion.
Observe also that $\mathcal{N}$ is open by definition, so we need to show that it is also closed in $\Omega$ in order to conclude $\mathcal{N}=\Omega$.

If $\mathcal{N}$ was not closed, there would exist $x_0\in (\partial\mathcal{N}\cap \Omega)\setminus \mathcal{N}$. Consider a small ball centered at $x_0$, namely $B_{2r}(x_0)\subset \Omega$, $r\leq 1$, and take some $\bar{x}\in B_{r/2}(x_0)$ with $\bar{x}\in \mathcal{N}$.
Then $u$ and $v$ vanish in some ball $B_s(\bar{x})$, with $s< r$. Now we claim that $u=v=0$ in $B_{r}(\bar{x})$.
Indeed, define
$$ R = \sup \{ s\in (0,r); \; u\equiv v \equiv 0 \textrm{ in }B_s (\bar{x})\}>0.$$
If $R<r$, we can repeat the argument deriving \eqref{u=0 in Br} to show that $u \equiv  0$, and hence also $v \equiv 0$ (as in \eqref{from u to v}), in any ball of center in $B_{R}(\bar{x})$ with radius $r^\prime$, for some $r^\prime>0$.
Note that achievement of the whole (arbitrary) ball is ensured because the radius only depends on the norm of $u,v$ and their derivatives on the boundary of the ball $B_{2r}(x_0)$. But this contradicts the definition of $R$ as a supremum. So $R =r$, i.e.\ we obtain $u\equiv v\equiv 0$ in $B_r(\bar{x})$, and the claim is proved. In turn, this contradicts the fact that $x_0 \not \in \mathcal{N}$, and hence we finally deduce that $\mathcal{N}=\Omega$, as desired.
\end{proof}

From Theorem \ref{UC general}, it is not difficult to derive the boundary at Corollary \ref{UCderivatives}.

\begin{proof}[Proof of Corollary \ref{UCderivatives}]
Fix some $x_0\in \Gamma$, write $x=(x^\prime, x_N)$, and consider $\Omega\cap B_r (x_0)=\{x\in B_r(x_0); \; x_N > \phi (x^\prime)\}$, for some small $r>0$ and for a $C^{1}$ function $\phi : \mathbb{R}^{N-1}\rightarrow \real$. We extend the domain near $x_0$ by choosing $\psi \in C_c^2 (\real^{N-1})$ with
\begin{center}
$\psi =0 $ \;in $|x^\prime - x_0^\prime|\geq r/2$; \quad $\psi =1$ \;in $|x^\prime - x_0^\prime|\leq r/4$.
\end{center}
That is, we set
$\mathcal{D}=\{x\in B_r(x_0), \; x_N> \phi (x^\prime)-\varepsilon \psi (x^\prime) \} \subset B_r(x_0)$.
Then $\mathcal{D}$ is an open bounded connected set, with $\partial\mathcal{D}\in C^{1}$.
Next, we define the functions $U,V$ which are extensions by $0$ of the original $u,v$ in $\mathcal{D}\setminus \overline{\Omega}$.
Since $u,v\in W^{2,m}(\Omega)\cap  C^1(\overline{\Omega})$ are so that $u=v=0$ and $\partial_{\nu}u = \partial_{\nu}v =0$ on $\Gamma$, then $U,V\in W^{2,m}(\mathcal{D})$ for $m>N$, and $U,V$ satisfy
\begin{center}
$-\Delta U= F(x,U,V)$, \quad $-\Delta V = G(x,U,V)$ in $\mathcal{D}$, \quad with $U=V=0$ on $\partial\mathcal{D}$,
\end{center}
where $F$ and $G$ are extensions by $0$ of $f, g$ in $\mathcal{D}\setminus \overline{\Omega}$ respectively. Since $U=V=0$ in some open ball $B\subset \mathcal{D}\setminus \overline{\Omega}$, then Theorem \ref{UC general} yields $U=V=0$ in $\mathcal{D}$.
\end{proof}

We focus now on the proof of the strong UCP for the semilinear fourth-order equation \eqref{4or}.

\begin{proof}[Proof of Theorem \ref{UCP fourth}]
We apply interior $L^p$ estimates for the biharmonic problem, see \cite[Chapter V]{AgDoNi} or \cite[Theorem 2.20]{GaGrSw}. That is, there exists $C>0$ such that, for every $w \in W^{4,m}(B_{3r}(x_0))$, $m>1$, we have
\[
\sum_{i=0}^4 r^i \|D^i w\|_{L^m(B_r(x_0))} \le C \left( r^4 \|\Delta^2 w\|_{L^m(B_{2r}(x_0))} + \|w\|_{L^m(B_{2r}(x_0))}\right).
\]
Using the growth assumption on $g$, we deduce in particular that, for sufficiently small $r>0$,
\[
\|\Delta u\|_{L^m(B_r(x_0))}  \le C \left( r^2 \| |u|^p \|_{L^m(B_{2r}(x_0))} + \frac{1}{r^2}\|u\|_{L^m(B_{2r}(x_0))}\right).
\]
Hence, if $u$ vanishes with infinite order at $x_0$, it also does $\Delta u$. At this point, letting $v = -\Delta u$, equation \eqref{4or} can be written as a second order system
\[
\begin{cases}
-\Delta u = v & \text{in $\Omega$} \\
-\Delta v = g(x,u,v) & \text{in $\Omega$},
\end{cases}
\]
which satisfies \eqref{growth} with $q=1$ and $p>1$. By the above discussion, both $u$ and $v$ vanishes with infinite order at $x_0$, and thus Theorem \ref{UC general} gives $u \equiv 0$ in $\Omega$.
\end{proof}

\begin{rmk}
The strong vanishing with infinite order at a point  as in the Definition \ref{def vanishing} for both $u$ and $v$ is equivalent to a stronger polynomial vanishing of $u,v$.
Precisely, we say that $w$ has \textit{polynomial vanishing of infinite order at the point $x_0$} if
\begin{align*}
\lim_{|x-x_0|\rightarrow 0} \frac{w(x-x_0)}{|x-x_0|^\beta}=0, \;\; \textrm{ for all }\; \beta\in \n.
\end{align*}
The equivalence of this and Definition \ref{def vanishing} is due to the Local Maximum Principle applied to each equation of the system.
To see this, assume that both $u$ and $v$ vanish with infinite order, say at $x_0=0$, in the sense of Definition \ref{def vanishing}, for some $\varepsilon_u, \varepsilon_v$ there.
Firstly note that $g(x,u(x),v(x))\in L^s(\overline{B}_r)$ for $s>N$ and small $r>0$; assume for instance $ps>\varepsilon_u$. Since $u,v$ are continuous at $0$, we can suppose $|u|, |v|\leq 1$ in some small ball centered at $0$; in particular $|u|^{ps}\leq |u|^{\varepsilon_u}$ there, and so
\begin{align*}
|v(x)|\leq C \left\{ \left(\int_{B_r} |v|^\varepsilon\right)^{1/\varepsilon} + \left(\int_{B_r} |u|^{ps}\right)^{1/s} \right\}=O(r^\beta),
\end{align*}
for all $x\in B_{r/2}$ and $\beta>0$, by taking $\varepsilon=\varepsilon_v$. Analogously we derive a pointwise estimate for $u$.
\end{rmk}

\begin{rmk}
It is natural to wonder whether or not the argument used in the proof of Theorem~\ref{UC general} yields to a Taylor expansion via nontrivial harmonic polynomials for both $u$ and $v$, close to each point of the zero level set $\{u=v=0\}$, as in \cite{CFJDE79, CFJDE85}. Such expansion would naturally provide a control on the Hausdorff dimension of its regular and singular set. However, in general such a Taylor expansion is not available, as shown by the counterexample $(x_1,0)$ which solves system \eqref{counter1}. It remains an open problem to establish the Taylor expansion for strongly coupled systems.
\end{rmk}

\section{Further discussion and open problems}\label{section discussion}

Let us start the section with a classification result which gives a sufficient condition to obtain \textit{proportionality of the components} of the solutions for the  system \eqref{gen syst}, and we give some applications. In what follows $\Omega$ will always be a \emph{bounded} domain.

Let $g:\Omega \times \R^2 \to \R$, $k>0$ be a constant, and $v$ be a solution of
\begin{align} \label{eq u}
\begin{array}{rclcc}
\Delta v +g(x,kv,v) &=& 0 &\mbox{in} & \Omega. \\
\end{array}
\end{align}
Now, suppose that $f:\Omega \times \R^2 \to \R$ is such that $f(x,kv,v) = kg(x,kv,v)$ for all $x\in \Omega$ and $v \in \R$. Then, it obvious that the pair $(kv,v)$ solves the system
\begin{align} \label{gen syst u=v}
\left\{
\begin{array}{rclcc}
\Delta u +f(x,u,v) &=& 0 &\mbox{in} & \Omega \\
\Delta v +g(x,u,v) &=& 0 &\mbox{in} & \Omega.
\end{array}
\right.
\end{align}

Here we give a condition on $f$ and $g$ which guarantees that any solution $u,v$ of \eqref{gen syst u=v} with $u=kv$ on $\partial \Omega$ is of the form $u=kv$ in $\Omega$. In this setting, the validity of the UCP is clearly reduced to the same problem for single equations.
We mention that this type of result has already been observed in \cite[eq.\ (1.9) and (1.12)]{MBS} and \cite[Section 2]{QuittnerSouplet}, see also \cite{Fa}.
For the sake of completeness we provide a (new) simpler proof which permits us to extend it to the fully nonlinear setting in Example \ref{FNU}. We start with the following technical lemma.

\begin{lem} \label{lemma condition}
Let $f,g:\Omega \times \R^2 \to \R$ be such that
\begin{equation}\label{condfg}
(u-kv)(f(x,u,v) - kg(x,u,v)) \leq 0 \ \textrm{ for all $x\in \Omega$ and } \ u, v \in \R,
\end{equation}
with $f, g$ continuous in the variables $u$ and $v$. Then $f(x,kt,t) = kg(x,kt,t)$ for all $x\in \Omega$ and $t \in \R$.
\end{lem}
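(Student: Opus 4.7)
The plan is to reduce the statement to a one-dimensional continuity argument: for each fixed $x$ and each fixed second argument $t$, the hypothesis \eqref{condfg} forces the function $u \mapsto f(x,u,t) - kg(x,u,t)$ to change sign at $u = kt$ in a controlled way, and continuity then pins down its value there.

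More precisely, I would fix $x \in \Omega$ and $t \in \R$, and introduce the auxiliary scalar function
\[
h(u) := f(x,u,t) - kg(x,u,t), \qquad u \in \R.
\]
By the assumption that $f$ and $g$ are continuous in the variable $u$ (for each fixed $x$ and $v=t$), $h$ is continuous on $\R$. Setting $v = t$ in \eqref{condfg} yields the inequality
\[
(u - kt)\, h(u) \le 0 \qquad \text{for all } u \in \R.
\]

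From this inequality one immediately reads off the sign of $h$ on each side of the point $u_0 := kt$: one has $h(u) \le 0$ whenever $u > kt$ and $h(u) \ge 0$ whenever $u < kt$. Taking the limit $u \to kt^+$ and using continuity of $h$ yields $h(kt) \le 0$, while taking the limit $u \to kt^-$ yields $h(kt) \ge 0$. Hence $h(kt) = 0$, which is exactly the desired identity $f(x,kt,t) = kg(x,kt,t)$. Since $x \in \Omega$ and $t \in \R$ were arbitrary, this completes the proof.

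I do not anticipate any serious obstacle: the lemma is essentially a one-variable continuity argument once the correct slice (in $u$, with $v$ fixed at $t$) is taken, and the key point is simply that the product condition \eqref{condfg} forces $h$ to have opposite signs on the two sides of $kt$, so continuity leaves no option but $h(kt)=0$.
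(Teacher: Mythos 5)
Your proof is correct and follows essentially the same route as the paper: both arguments test \eqref{condfg} at points on either side of the diagonal $u=kv$ near $(kt,t)$ and conclude by letting the perturbation tend to zero, using continuity to get the two opposite inequalities $f(x,kt,t)\le kg(x,kt,t)$ and $f(x,kt,t)\ge kg(x,kt,t)$. The only (harmless) difference is that you freeze $v=t$ and vary $u$ alone, whereas the paper perturbs both variables via $u=k(t\pm\eps)$, $v=t\mp\eps$; your slice even gets by with continuity in $u$ only.
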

\begin{proof}
Given $x\in \Omega$, $t\in \R$ and $\varepsilon>0$, set $u=k(t+\varepsilon)$, $v = t -\varepsilon$ to deduce $f(x,kt,t) \leq kg(x,kt,t)$. Then use $u=k(t-\varepsilon)$, $v = t +\varepsilon$ to conclude that $f(x,kt,t) = kg(x,kt,t)$.
\end{proof}

\begin{prop}\label{lemma u=v}
Suppose that $f,g:\Omega \times \R^2 \to \R$ satisfy \eqref{condfg}. Then, every solution $u,v$ of \eqref{gen syst u=v}, with $u=kv$ on $\partial\Omega$, is of the form $u=kv$ in $\Omega$.
\end{prop}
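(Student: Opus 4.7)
The natural strategy is to study the scalar function
\[
w := u - k v,
\]
which vanishes on $\partial \Omega$ by hypothesis, and to show that it vanishes identically on $\Omega$ via an energy estimate. Subtracting $k$ times the second equation of \eqref{gen syst u=v} from the first yields
\[
\Delta w = -\bigl( f(x,u,v) - k\, g(x,u,v) \bigr) \qquad \text{in } \Omega.
\]
Multiplying by $w = u - kv$ and invoking the structural assumption \eqref{condfg}, one obtains the pointwise inequality
\[
w\, \Delta w \;=\; -(u - k v)\bigl( f(x,u,v) - k\,g(x,u,v) \bigr) \;\ge\; 0 \qquad \text{a.e.\ in } \Omega.
\]

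The second step is to integrate this inequality and use the boundary condition. Since $u, v \in W^{2,m}_{\mathrm{loc}}(\Omega) \cap C^1(\overline{\Omega})$ (or at least $u,v \in W^{2,m}(\Omega)$ together with continuity up to $\partial\Omega$, which is what is needed to make sense of the Dirichlet datum), the function $w$ is regular enough to justify integration by parts. Because $w \equiv 0$ on $\partial \Omega$, the boundary term vanishes and we get
\[
0 \;\le\; \int_\Omega w\, \Delta w \,\mathrm{d}x \;=\; -\int_\Omega |\nabla w|^2 \,\mathrm{d}x \;\le\; 0.
\]
Hence $\nabla w \equiv 0$ in $\Omega$, so $w$ is locally constant on each connected component; combined with $w = 0$ on $\partial \Omega$, this forces $w \equiv 0$ on $\Omega$, i.e.\ $u = k v$, as claimed.

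The argument is essentially one-line once the correct test function is chosen, and the only point requiring some care is to ensure the validity of the integration by parts. No obstacle is expected here: if \eqref{gen syst u=v} is read in the strong $W^{2,m}$ sense with $m>N$, then $w \in W^{2,m}(\Omega) \cap C^1(\overline{\Omega})$, and Green's identity applies verbatim. (Should one only dispose of weak/viscosity regularity, the same proof goes through after a standard approximation argument, which is the sort of adaptation needed for the fully nonlinear extension announced in Example \ref{FNU}.)
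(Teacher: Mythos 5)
Your argument is correct for the semilinear statement, but it is genuinely different from the one in the paper. You work with $w=u-kv$, multiply the equation $\Delta w=-(f-kg)$ by $w$, and integrate by parts, using \eqref{condfg} and $w=0$ on $\partial\Omega$ to force $\int_\Omega |\nabla w|^2\le 0$; since $\Omega$ is a bounded (connected) domain, $w\equiv 0$ follows. The paper instead argues pointwise: it considers the \emph{square} $w=(u-kv)^2$, computes $\Delta w=2|Du-kDv|^2+2(u-kv)\{-f+kg\}\ge 0$ by \eqref{condfg}, and concludes $w\le 0$, hence $w\equiv 0$, from the maximum principle for subharmonic functions. The trade-off is worth noting. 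Your energy method needs to justify Green's identity, so it implicitly asks for some boundary regularity of $\Omega$ or, equivalently, that $u-kv\in H^1_0(\Omega)$ — a point you flag but should state as an explicit hypothesis, since the proposition only assumes $u=kv$ on $\partial\Omega$; the maximum-principle proof needs only continuity of $w$ up to the boundary and works on an arbitrary bounded domain. More importantly, the paper's choice of the squared difference is precisely what makes the argument transferable to the fully nonlinear, nondivergence setting of Example \ref{FNU}, where one estimates $\mathcal{M}^+_{\lambda,\Lambda}(D^2 w)$ from below and again invokes the maximum principle; there your integration-by-parts step is not merely a matter of ``a standard approximation argument'' — it is structurally unavailable, because operators like $F(x,D^2u)$ are not in divergence form and cannot be tested against $u-kv$. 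So your proof settles Proposition \ref{lemma u=v} as stated, but the paper's proof is the one that carries the extra payload.
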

\begin{proof}
Consider the function $w=(u-kv)^2$. Then $w$ satisfies
\begin{center}
$\Delta w =2|Du-kDv|^2 +2(u-kv)\{-f(x,u,v)+kg(x,u,v)\}\geq 0 $\;\; in $\Omega$, \quad $w=0$ on $\partial\Omega$,
\end{center}
which implies that $w\leq 0$ in $\Omega$ by the maximum principle.
\end{proof}

\begin{example}[Hamiltonian type systems]\label{ex u=v H}
Every solution $u,v$ of the problem $-\Delta u = f(x,v)$, $-\Delta v = f(x,u)$ in $\Omega$, where $f(x,s)$ is nondecreasing with $s$, with $u=v$ on $\partial\Omega$, is of the form $u=v$ in $\Omega$. In particular, this is the case if $f(x,s) = a(x)|s|^{p-1}s$ with $a(x)\geq 0$ in $\Omega$ and $p>0$.
\end{example}

\begin{example}[Gradient type systems]\label{ex u=v G}
Every solution $u,v$ of the problem $-\Delta u = f(x,u) + \lambda(x)v$, $-\Delta v = f(x,v) + \lambda(x)u$ in $\Omega$, where $f(x,s)$ is nonincreasing with $s$ and $\lambda(x) \geq0$ in $\Omega$, with $u=v$ on $\partial\Omega$, is of the form $u=v$ in $\Omega$. In this case one example is $f(x,s) = -|s|^{p-1}s$, where $p>0$, and $\lambda$ is a positive constant $($Allen-Cahn type systems$)$.
\end{example}

\begin{example}[Fully nonlinear operators]\label{FNU}
Consider a fully nonlinear operator $F(x,X)$ satisfying \eqref{SC} with $\gamma=0$.
We infer that the conclusion of Proposition \ref{lemma u=v} remains the same for such operators, that is, every solution $u,v$ of the problem
$-F(x, D^2 u) = f(x,u,v)$, $-F(x, D^2 v) = g(x,u,v)$ in $\Omega$, with $u=k v$ on $\partial\Omega$, is of the form $u=kv$ in $\Omega$ provided \eqref{condfg} holds.
Indeed, since $D^2 w= 2 D(u-kv) \otimes D(u-kv) + 2 D^2(u-kv) (u-kv)$, using \eqref{def Pucci} and \cite[Lemma 2.10(5)]{CafCab}, we formally obtain
\begin{align*}
\mathcal{M}^+_{\lambda,\Lambda} (D^2 w) &\geq 2 \mathcal{M}^-_{\lambda,\Lambda}((Du-kDv)\otimes (Du-kDv))+2 \mathcal{M}^+_{\lambda,\Lambda}((D^2 u-kD^2 v)) \\
&\geq 2 \lambda |Du-kDv|^2 +2(u-kv)\{F(x,D^2 u)-kF(x,D^2 v)\},
\end{align*}
since the spectrum of $z\otimes z = (z_i z_j)_{ij}$, with $z= (z_1, \ldots, z_N)$, is $\{0,\dots,0,|z|^2\}$.
Observe that it is enough for $u,v$ to be merely sub and supersolutions respectively, with $u= kv$ on $\partial\Omega$.

In order to make sense to the calculations above, namely in the strong sense, we assume that $F$ has locally $W^{2,m}$ regularity of solutions. This is true for instance if $F$ is convex or concave in the $X$ entry. In this case one also implies $f(x,u(x),v(x)),\; g(x,u(x),v(x))\in L^m(\Omega)$. This is not strictly necessary since we could perform such arguments in the $C$-viscosity sense as long as we have $F,f,g$ continuous in the variable $x$; details are left to the interested reader.
\end{example}

As an application for the preceding proportionality of components, we consider the special scenario $p=q=1$, in what concerns a partial answer to the limiting case $pq=1$; more precisely, let $(u,v)$ be a solution to
\begin{align*}
-\Delta u  = {d} v, \quad
-\Delta v ={d} u  \quad \mbox{in} \ \Omega, \qquad
u = v \quad \mbox{on} \ \partial \Omega,
\end{align*}
with $d>0$ and $\Omega$ bounded. In this case $u=v$ in $\Omega$ by Proposition \ref{lemma u=v}. Therefore, UCP follows from UCP for a single equation. In particular, this implies the validity of UCP for the biharmonic problem
\begin{align*}
\Delta^2 u = d^2 u  \quad \mbox{in}  \ \Omega, \qquad
-\Delta u = d u  \quad \mbox{on} \  \partial \Omega.
\end{align*}

We stress that there are cases in which a UCP might follow from a UCP for scalar equations without necessarily the system being degenerate into a single equation. For instance, the Emden- Fowler systems \eqref{gen syst} with $f(x,u,v) = |x|^{\beta}|u|^{r-1}u |v|^{q-1}v$ and $g(x, u,v) = |x|^{\alpha}|u|^{p-1}u|v|^{s-1}v$ for nonnegative $p,q,r,s,\alpha, \beta$, in the special cases of either $r\geq1$ and $p>0$, or $s\geq 1$ and $q>0$. In particular, one can cover these systems in cases in which assumptions \eqref{growth}-\eqref{superlinear} are not satisfied.

To finish, it is worth mentioning that we still do not know how to prove nonexistence of solutions for the Lane-Emden system in a starshaped domain, namely the following conjecture.

\begin{conjecture}
Let \eqref{supercritico} hold and $\Omega$ be a bounded domain, strictly starshaped with respect to $0 \in \Omega$. Then the problem \eqref{LE} with $u=v=0$ on $\partial\Omega$ does not admit any nontrivial solution.
\end{conjecture}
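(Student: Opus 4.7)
The natural plan is to imitate the proof of Theorem~\ref{Tnonexistence}, replacing the radial sign statement of Corollary~\ref{corol DuDv>0} by a non-radial analogue, and then pairing it with the boundary unique continuation principle in Corollary~\ref{UCderivatives}. Assume by contradiction that $(u,v)$ is a nontrivial classical solution of \eqref{LE} on a domain $\Omega$ strictly starshaped with respect to $0$, so that $x\cdot\nu \geq c_0 > 0$ on $\partial\Omega$. Mitidieri's Poho\v{z}aev-type identity for Hamiltonian systems gives
\[
\int_{\partial\Omega}(x\cdot\nu)\,\partial_\nu u\,\partial_\nu v\,\mathrm{d}S \;=\; \left\{\tfrac{N+\alpha}{p+1}+\tfrac{N+\beta}{q+1}-(N-2)\right\}\int_\Omega |x|^\alpha |u|^{p+1}\,\mathrm{d}x \;\leq\; 0,
\]
strict in the supercritical regime (as long as $u \not\equiv 0$) and an equality in the critical one.

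The key step I would then aim for is the pointwise sign condition $\partial_\nu u(x)\,\partial_\nu v(x)\geq 0$ for $\mathcal{H}^{N-1}$-a.e.\ $x\in\partial\Omega$. Coupled with the Poho\v{z}aev inequality and $x\cdot\nu>0$, this would force $\partial_\nu u\,\partial_\nu v \equiv 0$ on $\partial\Omega$; in the critical case this conclusion follows directly from the identity $\int_{\partial\Omega}(x\cdot\nu)\partial_\nu u\,\partial_\nu v = 0$ combined with the same sign condition. One then picks an open arc $\Gamma\subset\partial\Omega$ where, say, $\partial_\nu u \equiv 0$. On $\Gamma$ both $u$ and $\partial_\nu u$ vanish, and the first equation of \eqref{LE} reads $\Delta u = -|x|^\beta|v|^{q-1}v$. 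A Caffarelli--Friedman-type expansion of $u$ near a point of $\Gamma$, in local coordinates flattening the boundary, should propagate the vanishing from $u$ onto $v$: the higher-order vanishing forced on $u$ by the superlinear coupling with $v$ should, through the second equation $\Delta v = -|x|^\alpha|u|^{p-1}u$ and the strong coupling, produce $\partial_\nu v \equiv 0$ on some subarc $\Gamma'\subset \Gamma$. Corollary~\ref{UCderivatives} then annihilates $(u,v)$, contradicting the nontriviality assumption.

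The main obstacle is establishing the pointwise sign $\partial_\nu u\cdot \partial_\nu v\geq 0$. For solutions of one sign it is a one-line consequence of Hopf's lemma, which is why positive-solution nonexistence is already classical (cf.\ \cite[Proposition~3.1]{Mitidieri93}). However, the conjecture allows sign-changing solutions, whose interior nodal sets $\{u=0\}$ and $\{v=0\}$ may approach $\partial\Omega$; neither component need have constant sign in a one-sided tubular neighbourhood of a boundary arc, and so the ODE device used in Proposition~\ref{UCradial'} to isolate annular regions of definite sign and invoke Hopf is simply not available. In the same vein, the radial blow-up / iterative ABP step behind Theorem~\ref{UCradial} loses its dimensional reduction, while the interior Caffarelli--Friedman scheme of Theorem~\ref{UC general} yields unique continuation only from strong (infinite-order) vanishing at an interior point, which is not what Poho\v{z}aev directly produces at the boundary.

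A possible alternative route, which seems worth exploring but genuinely new, is to refine the Poho\v{z}aev identity with a better-chosen vector field $X(x)$ (replacing $x$) that is tangent to the nodal sets of $u$ or $v$ near their traces on $\partial\Omega$, thereby localising the boundary integrand and forcing definite signs on controlled subarcs; alternatively one could try an Almgren-frequency analysis of the pair $(u,v)$ at boundary points, classify the admissible tangential profiles, and rule out those compatible with strict starshapedness. Either route requires substantive new input beyond the tools developed in this paper, consistently with the authors presenting the statement as a conjecture rather than a theorem.
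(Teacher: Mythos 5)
The statement you are addressing is presented in the paper only as a conjecture: the authors do not prove it, and at the end of Section~\ref{section discussion} they explain exactly why. Your proposal does not close the gap either (and you say so honestly); what it does is reproduce the authors' own reduction. The Poho\v{z}aev identity of Mitidieri, weighted by $x\cdot\nu>0$ from strict starshapedness, shows that the conjecture would follow if one knew the pointwise sign condition $\partial_\nu u\,\partial_\nu v\ge 0$ on $\partial\Omega$ together with a boundary unique continuation principle in which only \emph{one} of the two normal derivatives is assumed to vanish on an open portion of the boundary. You identify the sign condition as the main obstacle, and this is correct: Hopf's lemma yields it only for one-signed solutions, and the mechanism of Proposition~\ref{UCradial'} --- isolating one-sided annuli of definite sign and running the ABP iteration --- relies essentially on the radial structure, so it has no counterpart when the nodal sets of a sign-changing solution may reach $\partial\Omega$ in a general starshaped domain.

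The one step in your sketch that is presented as if it were within reach, but in fact fails as stated, is the passage from $\partial_\nu u\equiv 0$ on an arc $\Gamma$ to $\partial_\nu v\equiv 0$ on a subarc via a ``Caffarelli--Friedman expansion in boundary-flattening coordinates''. The argument behind Corollary~\ref{UCderivatives} extends the pair by zero across $\Gamma$, and for the extensions to lie in $W^{2,m}$ (so that Theorem~\ref{UC general} applies at an interior point of the enlarged domain) one needs $u=v=0$ \emph{and} $\partial_\nu u=\partial_\nu v=0$ on $\Gamma$; with only $u$ and $\partial_\nu u$ vanishing, the extension of $v$ is not even $C^1$, and the interior scheme of Lemma~\ref{lemma CF} requires infinite-order vanishing of \emph{both} components. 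This missing one-component boundary UCP is precisely the ``stronger form of UCP'' the authors single out as open. In addition, flattening the boundary turns the Laplacian into a variable-coefficient operator, so the harmonic-polynomial decomposition at the heart of Lemma~\ref{lemma CF} does not transfer verbatim to boundary points. In short, your plan coincides with the strategy the authors themselves outline, your diagnosis of the two missing ingredients is accurate, but the conjecture remains unproved --- by the paper and by your proposal alike --- so this cannot be graded as a proof.
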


Notice that if we had $Du \cdot Dv >0$ on $\partial\Omega$ -- as in the radial case, see Corollary \ref{corol DuDv>0} -- then the Poho\v{z}aev identity for systems, see equation (3.4) in \cite{Mitidieri93},
\[
\int_{\partial \Omega} Du \cdot D v  ( x \cdot \nu) \rmd S =\int_{\partial \Omega} \partial_\nu u\, \partial_\nu v  ( x \cdot \nu)  \rmd S = \left\{ \frac{N+\alpha}{p+1}+\frac{N+\beta}{q+1} -(N-2)\right\} \int_{\Omega} |x|^{\alpha}|u|^{p+1} \rmd x\leq 0,
\]
would imply the conjecture above immediately. The conjecture could also be proved if we knew $Du \cdot Dv \geq 0$ on $\partial\Omega$ in addition to a stronger form of UCP with a vanishing derivative on an open set of the boundary for only one of the functions $u,v$ -- again as in the radial case, see Theorem~\ref{UCradial}.

\medskip

\textbf{{Declarations of interest:}} none.

\end{document}